\DeclareMathSymbol{\shortminus}{\mathbin}{AMSa}{"39}
\crefname{equation}{}{}
\newtheorem{theorem}{Theorem}[section]
\newtheorem{lemma}[theorem]{Lemma}
\newtheorem{definition}[theorem]{Definition}
\newtheorem{corollary}[theorem]{Corollary}
\newtheorem{proposition}[theorem]{Proposition}
\newtheorem{example}[theorem]{Example}
\newcommand{\mT}{\mathcal{T}}
\newcommand{\mG}{\mathcal{G}}
\newcommand{\mP}{\mathcal{P}}
\newcommand{\parr}[1]{\left({#1}\right)}
\newcommand{\abs}[1]{\left\lvert{#1}\right\rvert}
\newcommand{\floor}[1]{\left\lfloor{#1}\right\rfloor}
\newcommand{\ceil}[1]{\left\lceil{#1}\right\rceil}
\newcommand{\ao}{%
  \:\mathrel{%
    \vcenter{\offinterlineskip
      \ialign{##\cr$\alpha$\cr\noalign{\kern1.7pt}$\omega$\cr} %
    }%
  }%
  \!
}
\title{Independent Chains in Acyclic Posets}
\author{
	Nika Salia\thanks{Alfr\'ed R\'enyi Institute of Mathematics, Budapest. Central European University, Budapest. E-Mail: {\tt salia.nika@renyi.hu}.} \and
	Christoph Spiegel\thanks{Universitat Polit\`ecnica de Catalunya, Department of Mathematics, Edificio Omega, 08034 Barcelona, Spain, and Barcelona Graduate School of Mathematics. E-mail: {\tt christoph.spiegel@upc.edu}.} \and
	Casey Tompkins\thanks{Karlsruhe Institute of Technology, Germany.  Discrete Mathematics Group, Institute for Basic Science (IBS), Daejeon, Republic of Korea. E-Mail: {\tt ctompkins496@gmail.com}.}  \and
	Oscar Zamora\thanks{Central European University, Budapest. Universidad de Costa Rica, San Jos\'e.E-Mail: {\tt oscar.zamoraluna@ucr.ac.cr.}}
}
\date{\today}
\begin{document}

\maketitle

\begin{abstract}
	We consider the problem of determining the maximum order of an induced vertex-disjoint union of cliques in a graph. More specifically, given some family of graphs $\mG$ of equal order, we are interested in the parameter
 	\begin{equation*}
 		\ao(\mG) = \min_{G \in \mG} \max \big\{ |U| : U \subseteq V, \:G[U] \text{ is a vertex-disjoint union of cliques} \big\}.
 	\end{equation*}
	We determine the value of this parameter precisely when $\mG$ is the family of comparability graphs of $n$-element posets with acyclic cover graph. In particular, we show that $\ao(\mG) = (n+o(n))/\log_2 (n)$ in this class.
\end{abstract}

\section{Introduction}

Given a finite graph $G = (V,E)$, we consider the parameter
\begin{equation*}
	\ao(G) = \max \big\{ |U| : U \subseteq V, \:G[U] \text{ is a vertex-disjoint union of cliques} \big\}.
\end{equation*}
Trivially the bound
\begin{equation} \label{eq:ao}
	\ao(G) \geq \max \big( \alpha(G), \omega(G) \big)
\end{equation}
holds for any graph $G$. However, the disjoint union of $k \geq 1$ cliques of size $k$ provides an example on $k^2$ vertices where $\ao(G) = k^2$ is much greater than both the independence number $\alpha(G) = k$ and the clique number $\omega(G) = k$. In the other direction, it is obvious that $\ao(G) \le \alpha(G) \, \omega(G)$.

This parameter was first introduced in this form by Ertem et al.~\cite{Ertem}, but it can be shown to be equivalent to the \textsc{Cluster Vertex Deletion} problem~\cite{Huffner-etal-2010}.
There is a large volume of literature on the computational aspects of related parameters, see for example~\cite{Shamir}. In particular, it follows from the results in~\cite{Lewis}, that the problem of determining $\ao(G)$ is NP-hard in general. We take a different approach by investigating the extremal properties of the parameter.  In particular, we are interested in minimising $\ao(G)$ across graphs $G$ belonging to a certain class of graphs with a given number of vertices. The particular class of graphs we want to study is that of comparability graphs of posets.

\begin{definition} 
	A \emph{poset} consists of a finite set $P$ with some partial ordering $<$. We write $p_1 \nsim p_2$ when two elements $p_1,p_2 \in P$ are \emph{incomparable}, that is neither $p_1 < p_2$ nor $p_2 < p_1$. We also say that $p_1$ \emph{covers} $p_2$, if $p_1 > p_2$ and there is no $p_3 \in P$ such that $p_1 > p_3 > p_2$. There are three different graphs or diagrams commonly associated with a poset $P$, all of them on the vertex set $P$:
	\begin{enumerate} \setlength\itemsep{0em}
		\item	The \emph{comparability graph} of $P$ is the graph obtained by connecting two elements by an edge if and only if they are comparable.
		\item	The \emph{cover graph} of $P$ is the graph obtained by connecting two elements by an edge if and only if one covers the other.
		\item	The \emph{Hasse diagram} of $P$ is an embedding of the cover graph in the plane where for any two points $x,y \in P$, $x$ is drawn higher than $y$ if $x$ covers $y$.
	\end{enumerate}
\end{definition}

Given a poset $P$, we will write $\ao(P)$ for $\ao(G)$, where $G$ is the comparability graph of the poset. Note that determining $\ao(P)$ is the same as determining the size of the largest independent collection of chains in the poset, meaning that any two elements from two different chains are incomparable. In fact, writing $h(P)$ for the \emph{height} of $P$, that is the size of its largest chain, and $w(P)$ for the \emph{width} of $P$, that is the size of its largest anti-chain, we have $\alpha(G) = w(P)$ and $\omega(G) = h(P)$. Thus,~\cref{eq:ao} becomes
\begin{equation} \label{eq:hw}
	\ao(P) \geq \max \big( w(P), h(P) \big).
\end{equation}
Note that the corresponding upper bound $\ao(P) \leq w(P) \, h(P)$ is trivial since $\abs{P} \leq w(P) \, h(P)$.

\begin{example}
	If $P_n$ is the Boolean lattice of subsets of a set of size $n$, then 
\begin{equation*}
	\ao(P_n) = 2\binom{n-1}{\lfloor (n-1)/2 \rfloor} = \Theta \big( 2^n/\sqrt{n} \big).
\end{equation*}
The lower bound is obtained through the sets $\{ A : A \subset [n], |A| = n/2 \}$ when $n$ is even and by $\{ A , A \cup \{n\} : A \subset [n-1], |A| = \floor{(n-1)/2} \}$ for arbitrary $n$. A matching upper bound was given by Ahlswede and Zhang~\cite{Ahlswede} and follows from the Bollob\'as inequality~\cite{Bollobas-1965}.
\end{example}

Given a family $\mathcal{F}$ of finite posets, we write 
\begin{equation*}
	\ao(\mathcal{F}) = \min_{P \in \mathcal{F}} \ao(P).
\end{equation*}
When $\mP_n$ is the family of all posets of size $n$, we have $\ao(\mP_n) = \ceil{\sqrt{n}}$. A lower bound follows from Dilworth's theorem:  the size of the maximal antichain equals to the size of a minimal chain cover of $P$. Hence by the pigeonhole principle $h(P) \geq n/w(P)$ and therefore $\ao(P) \geq \max \{n/w(P),w(P)\} \geq \sqrt{n}$ from \cref{eq:hw}. For a matching upper bound, consider the complete multipartite ordering with $\ceil{\sqrt{n}}$ parts, $\floor{\sqrt{n}}$ of which are of size $\floor{\sqrt{n}}$ and one which is of size $n - \floor{\sqrt{n}}^2$.

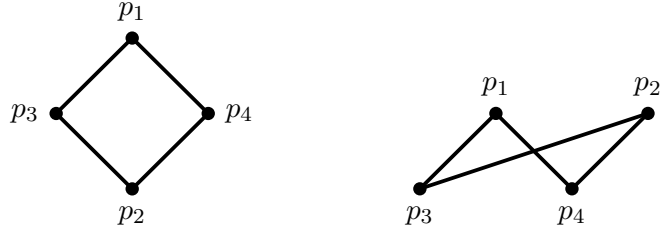
\begin{figure}[ht]
\begin{center}
%
	\begin{tikzpicture}
	\begin{scope}
		\draw[black, line width=0.5mm] (-1,0) -- (0,1);
		\draw[black, line width=0.5mm] (-1,0) -- (0,-1);
		\draw[black, line width=0.5mm] (0,1) -- (1,0);
		\draw[black, line width=0.5mm] (0,-1) -- (1,0);
		\node[circle,fill=black, inner sep=1pt, minimum size=5pt,label=above:{$p_1$}] (char) at (0,1) {};
		\node[circle,fill=black, inner sep=1pt, minimum size=5pt,label=below:{$p_2$}] (char) at (0,-1) {};
		\node[circle,fill=black, inner sep=1pt, minimum size=5pt,label=left:{$p_3$}] (char) at (-1,0) {};
		\node[circle,fill=black, inner sep=1pt, minimum size=5pt,label=right:{$p_4$}] (char) at (1,0) {};
	\end{scope}
	\end{tikzpicture}
%
\hspace{1.5cm}
%
	\begin{tikzpicture}
	\begin{scope}
		\draw[black, line width=0.5mm] (0,1) -- (1,0);
		\draw[black, line width=0.5mm] (1,0) -- (2,1);
		\draw[black, line width=0.5mm] (2,1) -- (-1,0);
		\draw[black, line width=0.5mm] (-1,0) -- (0,1);
		\node[circle,fill=black, inner sep=1pt, minimum size=5pt,label=above:{$p_1$}] (char) at (0,1) {};
		\node[circle,fill=black, inner sep=1pt, minimum size=5pt,label=above:{$p_2$}] (char) at (2,1) {};
		\node[circle,fill=black, inner sep=1pt, minimum size=5pt,label=below:{$p_3$}] (char) at (-1,0) {};
		\node[circle,fill=black, inner sep=1pt, minimum size=5pt,label=below:{$p_4$}] (char) at (1,0) {};
	\end{scope}
	\end{tikzpicture}
%
\end{center}
\caption{The two types of cycles that cannot occur in the Hasse diagram of an acyclic poset, with a cycle of type (1) to the left and a cycle of type (2) to the right.}  \label{fig:acyclic}
\end{figure}

There are other examples of posets where the parameter is of order $\sqrt{n}$. However, all of them rely on obtaining obstructions through cycles in the cover graph. This begs the question whether excluding such cycles allows one to obtain a significantly larger lower bound for the parameter. For this purpose, let us say that a poset $P$ is \emph{acyclic} if its cover graph is acyclic, that is there are no $p_1,p_2,p_3,p_4 \in P$ such that either (1) $p_1 > p_3,p_4 > p_2$ and $p_3 \nsim p_4$ or (2) $p_1,p_2 > p_3,p_4$, $p_3 \nsim p_4$ and $p_1 \nsim p_2$. See \cref{fig:acyclic} for an illustration of the two types of cycles. The following theorem gives a positive answer to the previous question.



\begin{theorem} \label{thm:acyclic}
	Let $\mathcal{T}_n$ denote the family of all acyclic posets of size $n$. We have
	\begin{equation*}
		\ao(\mathcal{T}_n) = \frac{n}{\log_2 n} \, \big( 1 + o(1) \big).
	\end{equation*}
\end{theorem}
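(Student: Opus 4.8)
The plan is to prove matching upper and lower bounds, each of which requires a separate idea. The parameter $\ao(\mathcal{T}_n)$ is a $\min$ over acyclic posets of a $\max$ over induced disjoint-clique-unions, so the upper bound requires \emph{constructing} a single acyclic poset $P$ on $n$ vertices in which every independent union of chains is small, while the lower bound requires showing that \emph{every} acyclic poset on $n$ vertices contains a large independent union of chains.

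\textbf{The lower bound.} First I would try to show that any acyclic poset $P$ on $n$ vertices satisfies $\ao(P) \geq (1-o(1)) \, n / \log_2 n$. The key structural fact is that the cover graph of an acyclic poset is a forest, so it has at most $n-1$ edges and is sparse. The strategy is to reduce to controlling the height: since $\ao(P) \geq \max(w(P), h(P))$ by~\cref{eq:hw}, if either the width or the height is already at least $n/\log_2 n$ we are done, so I may assume both $h(P)$ and $w(P)$ are comparatively small. Combining $h(P) \cdot w(P) \geq n$ (from Dilworth and pigeonhole) with acyclicity, the goal is to extract many disjoint chains that are pairwise incomparable. The natural approach is a greedy or probabilistic peeling argument on the Hasse forest: repeatedly pull out a long chain (a root-to-leaf or monotone path in the tree) together with everything comparable to it, and argue that each such chain together with its ``comparability shadow'' removes few vertices relative to the length of the chain gained. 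Because the cover graph is a tree, comparability is severely constrained — the set of elements comparable to a fixed chain forms a controlled sub-forest — and I expect one can show each extracted chain of length $\ell$ kills at most roughly $\ell \log_2 n$ vertices in the worst case, yielding an independent family of total size $\geq n/\log_2 n$.

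\textbf{The upper bound.} For $\ao(\mathcal{T}_n) \leq (1+o(1)) \, n/\log_2 n$ I would exhibit a near-optimal acyclic poset. The guiding intuition comes from the $\sqrt n$-type examples failing only because they use cycles: the extremal acyclic construction should be a \emph{tree-like} poset where long chains are forced to overlap in comparabilities. A natural candidate is a balanced tree order — take the Hasse diagram to be a complete binary (or $k$-ary) tree of depth $d$, with the partial order given by the ancestor relation, so that $n \approx 2^{d+1}$ and $\log_2 n \approx d$. In such a poset any chain is a path from some node down toward a leaf (of length at most $d$), and two chains are incomparable iff their node sets lie in ``sibling'' subtrees with no ancestor–descendant pair between them; this rigidity should cap the total size of any independent union of chains at about $n/d \approx n/\log_2 n$. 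I would verify acyclicity (trees contain none of the two forbidden configurations in~\cref{fig:acyclic}) and then bound $\ao$ by a counting/charging argument showing every vertex used lies in a chain contributing a $\log_2 n$ factor of ``waste.''

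\textbf{Main obstacle.} The hard part will be the lower bound, specifically making the peeling argument tight rather than merely order-correct. It is easy to get $\ao(P) \geq c\, n/\log_2 n$ for some constant $c<1$; pinning down the constant to $1+o(1)$ requires carefully balancing the trade-off between chain length and the number of incomparable elements sacrificed at each step, and matching it exactly against the tree construction. I expect the clean statement to come from an entropy or weighting argument — assigning each element a weight like $1/\log_2(\text{subtree size})$ and showing the extracted independent family captures nearly the full weight — and reconciling the optimal branching degree of the extremal construction (which the upper bound will pin down) with what the peeling bound can guarantee is where the delicate work lies.
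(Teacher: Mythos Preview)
Your upper-bound construction is wrong. In the complete binary tree of depth $d$ with the ancestor order, the $2^d$ leaves form an antichain, hence already an independent union of (singleton) chains of size $(n+1)/2$. So $\ao$ of this poset is of order $n$, not $n/\log_2 n$. The mistake is in the sentence ``this rigidity should cap the total size of any independent union of chains at about $n/d$'': that would only follow if every chain in the union had length close to $d$, but nothing forces this, and taking all leaves defeats the bound. The correct extremal construction is tree-shaped but not the balanced binary tree: one needs long chains inserted between successive branch points, so that the number of leaves (hence the width) is itself only about $n/\log_2 n$. Concretely, the paper's construction for $\ao = a = 2^k$ recursively places a chain of length $a/2$ above two disjoint copies of the construction for $a/2$; this yields a tree on $(k+1)2^k - 1$ vertices whose width and height are both $2^k$, and one checks $\ao = 2^k \approx n/\log_2 n$.

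For the lower bound, your peeling/entropy sketch is plausible in spirit but is not how the paper proceeds, and you correctly flag that getting the sharp constant this way is the delicate point; I do not see how to make ``each extracted chain of length $\ell$ kills at most $\ell\log_2 n$ vertices'' precise without essentially redoing the structural analysis. The paper takes a completely different route: it works with the \emph{inverse} problem, defining $X(a)$ to be the maximum size of an acyclic poset with $\ao = a$, and computes $X(a)$ exactly. The key structural steps are (i) a modification argument showing the extremum is attained by a connected $N$-free acyclic poset, (ii) a lemma that any such poset has a central element comparable to everything, which splits it into two $V$-free pieces, and (iii) an exact recursion $\Lambda(a) = \Lambda(\lceil a/2\rceil) + \Lambda(\lfloor a/2\rfloor) + \lfloor a/2\rfloor$ for the $V$-free problem. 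This gives $X(2^k) = (k+1)2^k - 1$ on the nose, from which both directions of the asymptotic follow simultaneously.
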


We will in fact obtain a more precise formula for the inverse of the problem, that is for a given parameter $a$ that is a power of $2$ we will determine the exact largest cardinality of an acyclic poset $P$ with $\ao(P) = a$. In particular, when $a = 2^k$ for some $k \geq 1$, we show that $|P| \leq (k+1) \, 2^k - 1$. 


\section{Proof of \cref{thm:acyclic}}
\label{sec:vfree}

Let us introduce two further types of restriction on posets which will be needed for the following proof, see \cref{fig:acyclic_free} for an illustration.

\begin{figure}[h]
\begin{center}
%
	\begin{tikzpicture}
	\begin{scope}
		\draw[black, line width=0.5mm] (-1,1) -- (0,0);
		\draw[black, line width=0.5mm] (1,1) -- (0,0);
		\node[circle,fill=black, inner sep=1pt, minimum size=5pt,label=above:{$p_2$}] (char) at (-1,1) {};
		\node[circle,fill=black, inner sep=1pt, minimum size=5pt,label=above:{$p_3$}] (char) at (1,1) {};
		\node[circle,fill=black, inner sep=1pt, minimum size=5pt,label=below:{$p_1$}] (char) at (0,0) {};
	\end{scope}
	\end{tikzpicture}
%
\hspace{1.5cm}
%
	\begin{tikzpicture}
	\begin{scope}
		\draw[black, line width=0.5mm] (-2,0) -- (-1,1);
		\draw[black, line width=0.5mm] (-1,1) -- (0,0);
		\draw[black, line width=0.5mm] (1,1) -- (0,0);
		\node[circle,fill=black, inner sep=1pt, minimum size=5pt,label=below:{$p_3$}] (char) at (-2,0) {};
		\node[circle,fill=black, inner sep=1pt, minimum size=5pt,label=above:{$p_1$}] (char) at (-1,1) {};
		\node[circle,fill=black, inner sep=1pt, minimum size=5pt,label=above:{$p_2$}] (char) at (1,1) {};
		\node[circle,fill=black, inner sep=1pt, minimum size=5pt,label=below:{$p_4$}] (char) at (0,0) {};
	\end{scope}
	\end{tikzpicture}
	
%
\end{center}
\caption{The picture on the left shows the forbidden type of structure which cannot occur in the Hasse diagram of a $V$-free poset.  The picture on the right shows the forbidden type of structure which cannot occur in the Hasse diagram of an $N$-free poset.}  \label{fig:acyclic_free}
\end{figure}
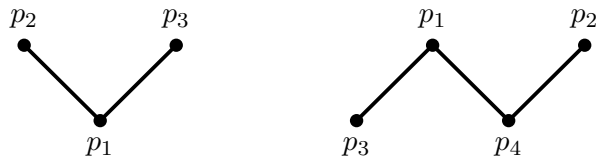

\begin{definition}
	A poset $P$ is \emph{$V$-free}, if there are no $p_1,p_2,p_3 \in P$ such that $p_1 < p_2,p_3$ and $p_2 \nsim p_3$. If there are $(p_1,p_2,p_3)$ violating that condition, we say that they form a \emph{$V$-shape}. 
	
	A poset $P$ is \emph{$N$-free}, if there are no $p_1,p_2,p_3,p_4 \in P$ such that $p_1 > p_3$,  $p_1 > p_4$, $p_2 > p_4$,  $p_1 \nsim p_2$, $p_2 \nsim  p_3$ and $p_3 \nsim p_4$. If there are $(p_1,p_2,p_3,p_4)$ violating that condition, we say that they form an \emph{$N$-shape}.
\end{definition}

Clearly a $V$-free poset necessarily is also acyclic and $N$-free. The central idea of the proof will be to go from acyclic posets first to $N$-free posets and then to $V$-free posets through a sequence of auxiliary statements. 
The following result establishes the first part of that argument. We say that a poset $P$ is \emph{connected} if its cover graph is connected.

\begin{proposition}
	For any $n$ there is a connected $N$-free poset $P \in \mT_n$ for which $\ao(P) = \ao(\mT_n)$.
\end{proposition}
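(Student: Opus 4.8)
The plan is to start from any minimiser $P \in \mT_n$ with $\ao(P) = \ao(\mT_n)$ and to reshape it, without ever increasing $\ao$ or changing $|P|$ and while staying inside $\mT_n$, first into a connected poset and then into a connected $N$-free one. Two facts drive everything. First, a poset is connected and acyclic exactly when its Hasse diagram is a tree, so after the first reduction I may argue about a tree. Second, a graph is a vertex-disjoint union of cliques if and only if it is $P_3$-free, and $P_3$-freeness is hereditary; hence for any partition $V(G) = X \sqcup Y$ an optimal cluster-inducing set $U$ splits into $U \cap X$ and $U \cap Y$, each of which still induces a union of cliques, giving the subadditivity bound $\ao(G) \le \ao(G[X]) + \ao(G[Y])$. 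For a poset this reads $\ao(P) \le \ao(P[X]) + \ao(P[Y])$ for every partition of $P$.

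For the connectivity reduction, suppose $P$ has connected components $P_1, \dots, P_k$ with $k \ge 2$. Since no element of one component is comparable to an element of another, combining optimal independent chain collections across components gives $\ao(P) \ge \sum_i \ao(P_i)$, while the subadditivity bound gives the reverse, so $\ao(P) = \sum_i \ao(P_i)$. Now merge $P_1$ and $P_2$ into a single connected poset $P_1'$ by choosing a maximal element $x$ of $P_1$ and a minimal element $y$ of $P_2$ and adding the single cover relation $x \lessdot y$. Because the only new relations run from the down-set of $x$ up into the up-set of $y$, no two elements of $P_1$ (nor of $P_2$) become newly comparable, so $P_1'[P_1] = P_1$ and $P_1'[P_2] = P_2$ as posets, and the cover graph gains only the bridge $xy$ and stays acyclic. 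The subadditivity bound then yields $\ao(P_1') \le \ao(P_1) + \ao(P_2)$. Replacing $P_1,P_2$ by $P_1'$ produces $Q \in \mT_n$ with one fewer component and $\ao(Q) \le \sum_i \ao(P_i) = \ao(\mT_n)$, hence $\ao(Q) = \ao(\mT_n)$. Iterating yields a connected minimiser.

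For the $N$-free reduction I now work with a tree poset $P$. A short case analysis should show that a minimal $N$-shape in a tree is witnessed by a cover edge $u \lessdot w$ at which there is branching on both sides: $u$ has a second upper cover and $w$ has a second lower cover. I would then run an extremal argument: among all connected acyclic minimisers of order $n$, fix one minimising a suitable monovariant — I would first try the number of comparable pairs, and failing that the number of $N$-shapes — and assume for contradiction that such a bad edge $u \lessdot w$ survives. The intended move is a tree rotation: cut $u \lessdot w$, which separates the diagram into the subtree $T_u$ containing $u$ and the subtree $T_w$ containing $w$, and reattach $T_u$ to $T_w$ by a single new cover relation placed so that the unique tree-path between $u$ and $w$ is no longer monotone, so that $u$ and $w$ become incomparable. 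The diagram stays a tree on $n$ vertices, a freshly added bridge can never itself be the bad edge of an $N$-shape, and the specific $N$-shape through $u \lessdot w$ is destroyed.

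The main obstacle is proving that this rotation does not increase $\ao$ and does not create new $N$-shapes, so that the monovariant genuinely drops. The difficulty is that $\ao$ is not monotone under adding or removing comparabilities: cutting $u \lessdot w$ deletes all comparabilities between the down-set of $u$ and the up-set of $w$, while reattaching inserts a different block of cross-comparabilities, and a large independent chain collection could in principle exploit the newly created incomparabilities. The subadditivity bound only gives $\ao(Q), \ao(P) \le \ao(P[T_u]) + \ao(P[T_w])$, which does not by itself compare $\ao(Q)$ with $\ao(P)$. To close the gap I would aim to exhibit, for the particular reattachment chosen, a size-preserving injection from independent chain collections of $Q$ into those of $P$, charging each chain of $Q$ to a chain of $P$ along the unique tree-paths and using that the chosen reattachment only relaxes, rather than tightens, the relevant incomparabilities. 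Making this charging precise, and verifying that no new $N$-shape appears elsewhere in the tree, is the crux of the argument.
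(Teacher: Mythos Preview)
Your connectivity reduction is correct and matches the paper's. The $N$-free reduction, however, has both a concrete error and a genuine gap.

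The error is your claimed characterisation of $N$-shapes in a tree poset. You assert that a minimal $N$-shape is witnessed by a cover edge $u \lessdot w$ where $u$ has a second upper cover and $w$ a second lower cover. This is false. Take the chain $p_1' > p_2' > p_3'$ and attach a leaf $p_3$ covered by $p_1'$ and a leaf $p_2$ covering $p_3'$. Then $(p_1', p_2, p_3, p_3')$ is an $N$-shape, yet no cover edge has branching on both sides: $p_1'$ branches only downward, $p_3'$ only upward, and $p_2'$ not at all. The obstruction is that the two branch points may be separated by an arbitrarily long monotone path, and this is precisely the situation the paper has to handle.

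The gap is the one you flag yourself: you propose an unspecified ``tree rotation'' and an unspecified ``size-preserving injection'' and concede that making them precise is the crux. The paper's proof shows why this is genuinely delicate. It takes the maximal chain $p_1 = p_1' > \cdots > p_k' = p_4$ between the two branch points, argues (by re-choosing the $N$-shape) that there is no further branching along the interior of this chain and that the lower branch $D_1$ is $V$-free, and then performs a very specific move: it slides both $D_1$ and the upper branch $U_k$ so that they attach at the single interior vertex $p_x'$ with $x = \max(k - h(D_1),1)$. The choice of $x$ is not arbitrary; it is exactly what makes a seven-case analysis of an optimal independent chain collection in the modified poset go through, with two of the cases requiring an explicit swap of $k-x$ chain elements for a maximum chain in $D_1$ (or vice versa) of length $h(D_1)$. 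Neither a generic cut-and-reattach nor the monovariants you suggest (number of comparable pairs, number of $N$-shapes) would survive this analysis without that calibration; the paper's monovariant is the number of pairs $(p_2,p_3)$ occurring in some $N$-shape, and the modification is designed so that $p_2 > p_3$ afterwards while no new such pair is created.
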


\begin{proof} 
	Let $P \in \mathcal{T}_n$ such that $\ao(P) = \ao(\mT_n)$. We may assume, without loss of generality, that $P$ is connected since we can connect two disjoint components $P_1,P_2 \subset P$ by choosing a minimum element $p' \in P_1$ and a maximum element $p'' \in P_2$ and adding the relation $p' > p''$ to the Hasse diagram of $P$. This modification does not change the number of elements and does not increase the parameter $\ao (P)$, therefore the parameter does not change, since  $\ao(P) = \ao(\mT_n)$. Without loss of generality, we may also assume that $P$ is chosen such that it is connected and minimises the number of $p_2$ and $p_3$ for which there exists $p_1$, $p_2$ such that $(p_1,p_2,p_3,p_4)$ are an $N$-shape. We will prove that this number is in fact zero.

	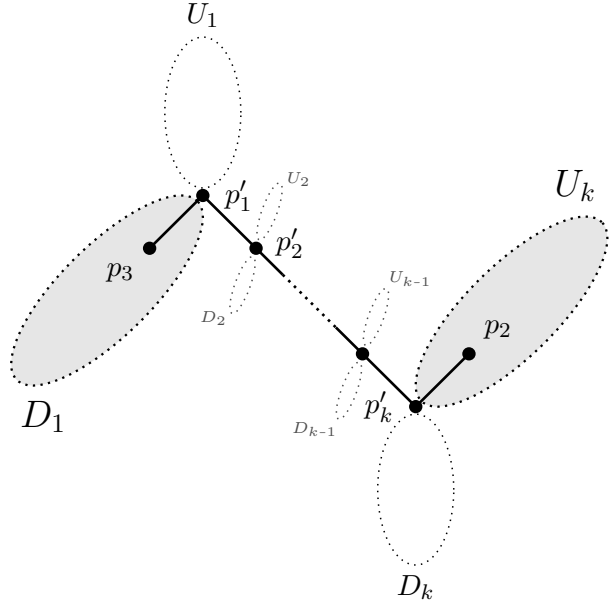
\begin{figure}[h]
		\begin{center}
			\begin{tikzpicture}
\begin{scope}
	\coordinate (ZERO) at (0,0);
	\coordinate (R) at (0.7,0);
	\coordinate (L) at (-0.7,0);
	\coordinate (D) at (0,-0.7);
	\coordinate (U) at (0,0.7);
	\coordinate (UR) at (0.7,0.7);
	\coordinate (UL) at (-0.7,0.7);
	\coordinate (DR) at (0.7,-0.7);
	\coordinate (DL) at (-0.7,-0.7);
	\coordinate (P3) at (ZERO);
	\coordinate (P1) at ($(P3) + (UR)$);
	\coordinate (P1') at (P1);
	\coordinate (P2') at ($(P1') + (DR)$);
	\coordinate (P3') at ($(P2') + (DR)$);
	\coordinate (P4') at ($(P3') + (DR)$);
	\coordinate (P5') at ($(P4') + (DR)$);
	\coordinate (Px') at ($0.5*(P3') + 0.5*(P4')$);
	\coordinate (P4) at (P5');
	\coordinate (P2) at ($(P4) + (UR)$);
	\draw[rotate=45, black, fill=black!10, line width=0.3mm, dotted] ($(P1') + 1.8*(DL)$) ellipse (1.7*1.41*0.7 and 0.6);
	\node[label=below:{\Large $D_1$}] (char) at ($(P3)+2*(DL)+0.5*(D)$){};
	\draw[rotate=45, black, fill=black!10, line width=0.3mm, dotted] ($(P2)+0.8*(UR)$) ellipse (1.7*1.41*0.7 and 0.6);
	\node[label=above:{\Large $U_k$}] (char) at ($(P2)+2*(UR)+0.5*(U)$){};
	\draw[black, line width=0.2mm, dotted] ($(P1')+(0,1.1)$) ellipse (0.5 and 1);
	\node[label=above:{$U_1$}] (char) at ($(P1')+2.8*(U)$){};
	\draw[black, line width=0.2mm, dotted] ($(P5')-(0,1.1)$) ellipse (0.5 and 1);
	\node[label=below:{$D_k$}] (char) at ($(P5')+2.8*(D)$){};
	\draw[black!70, rotate=-20, line width=0.2mm, dotted] ($(P2')+(0,0.52)$) ellipse (0.1 and 0.4);
	\node[label=above:{\color{black!70} \tiny $U_2$}] (char) at ($(P2')+0.8*(UR)$){};
	\draw[black!70, rotate=-20, line width=0.2mm, dotted] ($(P2')-(0,0.52)$) ellipse (0.1 and 0.4);
	\node[label=below:{\color{black!70} \tiny $D_2$}] (char) at ($(P2')-0.8*(UR)$){};
	\draw[black!70, rotate=-20, line width=0.2mm, dotted] ($(P4')+(0,0.52)$) ellipse (0.1 and 0.4);
	\node[label=above:{\color{black!70} \tiny $U_{k \shortminus 1}$}] (char) at ($(P4')+0.9*(UR)$){};
	\draw[black!70, rotate=-20, line width=0.2mm, dotted] ($(P4')-(0,0.52)$) ellipse (0.1 and 0.4);
	\node[label=below:{\color{black!70} \tiny $D_{k \shortminus 1}$}] (char) at ($(P4')-0.9*(UR)$){};
	\draw[black, line width=1pt] (P1) -- (P3);
	\draw[black, line width=1pt] (P1') -- ($0.5*(P2') + 0.5*(P3')$);
	\draw[black, dotted, line width=1pt] ($0.5*(P2') + 0.5*(P3')$) -- ($0.5*(P3') + 0.5*(P4')$);
	\draw[black, line width=1pt] ($0.5*(P3') + 0.5*(P4')$) -- (P5');
	\draw[black, line width=1pt] (P4) -- (P2);
	\node[circle, fill=black, inner sep=1pt, minimum size=5pt, label=225:{$p_3$}] (char) at (P3) {};
	\node[circle, fill=black, inner sep=1pt, minimum size=5pt, label=45:{$p_2$}] (char) at (P2) {};
	\node[circle, fill=black, inner sep=1pt, minimum size=5pt, label=right:{$\,p_1'$}] (char) at (P1') {};
	\node[circle, fill=black, inner sep=1pt, minimum size=5pt, label={[xshift=1pt, yshift=2pt] right:$p_2'$}] (char) at (P2') {};
	\node[circle, fill=black, inner sep=1pt, minimum size=5pt, label=225:{}] (char) at (P4') {};
	\node[circle, fill=black, inner sep=1pt, minimum size=5pt, label=left:{$p_k'\,$}] (char) at (P5') {};
\end{scope}
\end{tikzpicture}
		\end{center}
	\caption{The $N$-shape $(p_1,p_2,p_3,p_4)$ in $P$ with the sets $U_i$ and $D_i$ indicated.}  \label{fig:k3}
	\end{figure}

	Assume to the contrary that there exists an $N$-shape  $(p_1,p_2,p_3,p_4)$ in $P$ where, without loss of generality, $p_1$ covers $p_3$ 
	 and $p_2$ covers $p_4$. Furthermore, let $p_1 = p_1' > p_2' > \dots > p_k' = p_4$ be a maximal chain connecting $p_1$ and $p_4$ in $P$, that is $p_{i-1}$ covers $p_i$ for any $2 \leq i \leq k$ and for some $k \geq 2$. For any $1 \leq i \leq k$, we define the following two sets of elements in $P$:
	\begin{itemize}  \setlength\itemsep{0em}
		\item[(i)] 	$U_i$ is the set of elements, such that path connecting them to $p_i'$ in the cover graph contains an element covering $p_i'$, distinct from $p_{i-1}'$.
		\item[(ii)]	$D_i$ is the set of elements, such that  path connecting them to $p_i'$ in the cover graph contains an element being covered by $p_i'$, distinct from $p_{i+1}'$.
	\end{itemize}
	Note that there is always a unique path connecting any two elements in the cover graph since $P$ is acyclic and connected. For that same reason, the sets $U_i$ and $D_i$ are completely disjoint and together with the chain $\{p_1',\dots,p_k'\}$ make up a partition of $P$. In the Hasse diagram, $U_i$ and $D_i$ are the set of elements respectively going upwards and downwards from $p_i'$ when not following the chain $\{p_1', \dots, p_k'\}$, see \cref{fig:k3} for an illustration. Note that $p_3 \in D_1$ and $p_2 \in U_k$, but otherwise we may have $U_i,D_i = \emptyset$. In fact, we may assume, without loss of generality, that 	
	\begin{equation} \label{eq:UiDi_assumption}
		U_i = D_i = \emptyset \quad \text{ for any } 2 \leq i \leq k-1,
	\end{equation}
	since otherwise we could modify our choice of $p_3$ or $p_2$ to get a pair satisfying this condition. We can likewise assume that $D_1$ is $V$-free, as otherwise we could again modify our choice of $p_3$ or $p_2$ and, if necessary, invert all relations in the poset without changing $\ao(P)$ and number of $N$-shapes. Next, we let
	\begin{equation} \label{eq:x}
		x = \max \big( k - h(D_1), 1 \big)
	\end{equation}
	and note that $x < k$ since $h(D_1) \geq 1$ and $k \geq 2$.

	\begin{figure}[h]
	\bigskip
		\begin{center}
			\begin{minipage}{0.4\textwidth}
				\begin{tikzpicture}
\begin{scope}
	\coordinate (ZERO) at (0,0);
	\coordinate (R) at (0.5,0);
	\coordinate (L) at (-0.5,0);
	\coordinate (D) at (0,-0.5);
	\coordinate (U) at (0,0.5);
	\coordinate (UR) at (0.5,0.5);
	\coordinate (UL) at (-0.5,0.5);
	\coordinate (DR) at (0.5,-0.5);
	\coordinate (DL) at (-0.5,-0.5);
	\coordinate (P3) at (ZERO);
	\coordinate (P32) at ($(P3) + 1.5*(DL)$);
	\coordinate (P1) at ($(P3) + (UR)$);
	\coordinate (P1') at (P1);
	\coordinate (P2') at ($(P1') + (DR)$);
	\coordinate (P3') at ($(P2') + (DR)$);
	\coordinate (P4') at ($(P3') + (DR)$);
	\coordinate (P5') at ($(P4') + (DR)$);
	\coordinate (Px') at ($0.5*(P3') + 0.5*(P4')$);
	\coordinate (P4) at (P5');
	\coordinate (P2) at ($(P4) + (UR)$);
	\draw[dashed, line width=0.1mm] ($(ZERO) + 4*(L) + 9*(D)$) -- ($(ZERO) + 4*(L) + 7*(U)$) -- ($(ZERO) + 10*(R) + 7*(U)$) -- ($(ZERO) + 10*(R) + 9*(D)$) -- ($(ZERO) + 4*(L) + 9*(D)$);
	\node[label={\huge $P$}] (char) at ($(ZERO) + 8.5*(R) +4.5*(U)$) {};
	\draw[rotate=45, black, fill=black!10, line width=0.3mm, dotted] ($(P1') + 1.6*(DL)$) ellipse (1.06 and 0.6);
	\node[label=below:{\Large $D_1$}] (char) at ($(P3)+2*(DL)$){};
	\draw[rotate=45, black, fill=black!10, line width=0.3mm, dotted] ($(P2)+0.6*(UR)$) ellipse (1.06 and 0.6);
	\node[label=above:{\Large $U_k$}] (char) at ($(P2)+2*(UR)$){};
	\draw[black, line width=0.2mm, dotted] ($(P1')+(0,1.1)$) ellipse (0.5 and 1);
	\node[label=above:{$U_1$}] (char) at ($(P1')+3.8*(U)$){};
	\draw[black, line width=0.2mm, dotted] ($(P5')-(0,1.1)$) ellipse (0.5 and 1);
	\node[label=below:{$D_k$}] (char) at ($(P5')+3.8*(D)$){};

	\draw[black, line width=1pt] (P1) -- (P3);
	\draw[black!10!black, line width=1pt] (P1') -- ($0.5*(P2') + 0.5*(P3')$);
	\draw[black!10!black, dotted, line width=1pt] ($0.5*(P2') + 0.5*(P3')$) -- ($0.5*(P4') + 0.5*(P5')$);
	\draw[black!10!black, line width=1pt] ($0.5*(P4') + 0.5*(P5')$) -- (P5');
	\draw[black, line width=1pt] (P4) -- (P2);
	\node[circle, fill=black, inner sep=1pt, minimum size=5pt, label=225:{$p_3$}] (char) at (P3) {};
	\node[circle, fill=black, inner sep=1pt, minimum size=5pt, label=45:{$p_2$}] (char) at (P2) {};
	\node[circle, fill=black!10!black, inner sep=1pt, minimum size=5pt, label=above:{}] (char) at (P1') {};
	\node[circle, fill=black!10!black, inner sep=1pt, minimum size=5pt, label=45:{$p_2'$}] (char) at (P2') {};
	\node[circle, fill=black!10!black, inner sep=1pt, minimum size=5pt, label=45:{$p_x'$}] (char) at (Px') {};
	\node[circle, fill=black!10!black, inner sep=1pt, minimum size=5pt, label=below:{}] (char) at (P5') {};
\end{scope}
\end{tikzpicture}

		
			\end{minipage}
			\hspace{0.05\textwidth}
			\begin{minipage}{0.4\textwidth}
				\begin{tikzpicture}
\begin{scope}
	\coordinate (ZERO) at (0,0);
	\coordinate (R) at (0.5,0);
	\coordinate (L) at (-0.5,0);
	\coordinate (D) at (0,-0.5);
	\coordinate (U) at (0,0.5);
	\coordinate (UR) at (0.5,0.5);
	\coordinate (UL) at (-0.5,0.5);
	\coordinate (DR) at (0.5,-0.5);
	\coordinate (DL) at (-0.5,-0.5);
	\coordinate (P3) at ($(ZERO) + (L)$);
	\coordinate (P32) at ($(P3) + 1.5*(DL)$);
	\coordinate (P1) at ($(P3) + (UR)$);
	\coordinate (P1') at (P1);
	\coordinate (P2') at ($(P1') + (DR)$);
	\coordinate (P3') at ($(P2') + (DR)$);
	\coordinate (P4') at ($(P3') + (DR)$);
	\coordinate (P5') at ($(P4') + (DR)$);
	\coordinate (Px') at ($0.5*(P3') + 0.5*(P4')$);
	\coordinate (P4) at (P5');
	\coordinate (P2) at ($(P4) + (UR)$);
	\coordinate (P3M) at ($(Px') + (DL)$);
	\coordinate (P32M) at ($(P3M) + 1.5*(DL)$);
	\coordinate (P2M) at ($(Px') + (UR)$);
	\draw[dashed, line width=0.1mm] ($(ZERO) + 4*(L) + 9*(D)$) -- ($(ZERO) + 4*(L) + 7*(U)$) -- ($(ZERO) + 8.5*(R) + 7*(U)$) -- ($(ZERO) + 8.5*(R) + 9*(D)$) -- ($(ZERO) + 4*(L) + 9*(D)$);
	\node[label={\huge $P'$}] (char) at ($(ZERO) + 7*(R) +4.5*(U)$) {};
	\draw[rotate=45, black!10!black, fill=black!10, line width=0.3mm, dotted] ($(Px') + 1.6*(DL)$) ellipse (1.06 and 0.6);
	\node[label=below:{\Large $D_1$}] (char) at ($(P3M)+2*(DL)$){};
	\draw[rotate=45, black, fill=black!10, line width=0.3mm, dotted] ($(P2M)+0.6*(UR)$) ellipse (1.06 and 0.6);
	\node[label=above:{\Large $U_k$}] (char) at ($(P2M)+2*(UR)$){};
	\draw[black, line width=0.2mm, dotted] ($(P1')+(0,1.1)$) ellipse (0.5 and 1);
	\node[label=above:{$U_1$}] (char) at ($(P1')+3.8*(U)$){};
	\draw[black, line width=0.2mm, dotted] ($(P5')-(0,1.1)$) ellipse (0.5 and 1);
	\node[label=below:{$D_k$}] (char) at ($(P5')+3.8*(D)$){};
	\draw[black!10!black, line width=1pt] (Px') -- ($(P3M)$);
	\draw[black!10!black, line width=1pt] (P1') -- ($0.5*(P2') + 0.5*(P3')$);
	\draw[black!10!black, dotted, line width=1pt] ($0.5*(P2') + 0.5*(P3')$) -- (Px');
	\draw[black, dotted, line width=1pt] (Px') -- ($0.5*(P4') + 0.5*(P5')$);
	\draw[black, line width=1pt] ($0.5*(P4') + 0.5*(P5')$) -- (P5');
	\draw[black, line width=1pt] (Px') -- (P2M);
	\node[circle, fill=black!10!black, inner sep=1pt, minimum size=5pt, label=225:{$p_3$}] (char) at (P3M) {};
	\node[circle, fill=black, inner sep=1pt, minimum size=5pt, label=45:{$p_2$}] (char) at (P2M) {};
	\node[circle, fill=black!10!black, inner sep=1pt, minimum size=5pt, label=left:{$p_1'$}] (char) at (P1') {};
	\node[circle, fill=black!10!black, inner sep=1pt, minimum size=5pt, label=45:{$p_2'$}] (char) at (P2') {};
	\node[circle, fill=black!10!black, inner sep=1pt, minimum size=5pt, label=45:{}] (char) at (Px') {};
	\node[circle, fill=black, inner sep=1pt, minimum size=5pt, label=right:{$p_k'$}] (char) at (P5') {};
\end{scope}
\end{tikzpicture}

		
			\end{minipage}
		\end{center}
	\caption{The poset $P$ and its modification $P'$.}  \label{fig:kmod}
	\end{figure}
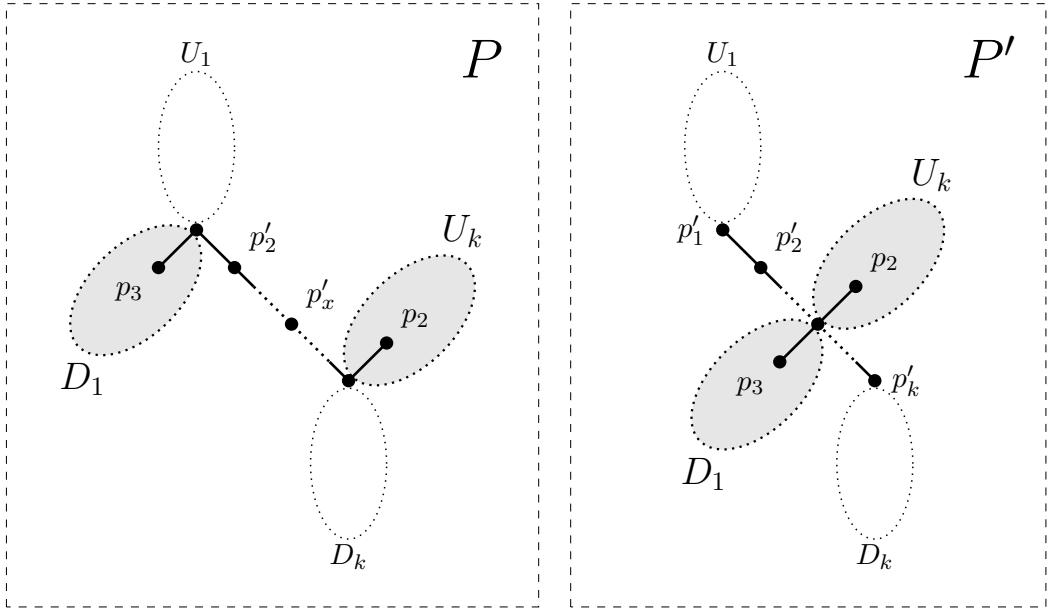

	We now define $P'$ to be the poset obtained from $P$ by making the following changes in the Hasse diagram: each edge from $p_1'$ to an element in $D_1$ is replaced with an edge from $p_x'$ to that same element, moving $D_1$ downward appropriately, and each edge from an element in $U_k$ to $p_k'$ is replaced with an edge from that same element to $p_x'$, moving $U_k$ upward appropriately. This just means that we are 'shifting' $D_1$ downwards and $U_k$ upwards so that both sets are connected to $p_x' \in \{p_1',\ldots,p_{k-1}'\}$. See \cref{fig:kmod} for an illustration of how $P'$ is obtained from $P$.
	
	Clearly the size of the poset remains unchanged, that is $|P'| = \abs{P}$. We also note that  $P'$ contains  fewer $N$-shapes than $P$, since now $p_2 > p_3$, so any $N$-shapes containing $p_2$ and $p_3$ have been removed, and no additional ones have been added due to \cref{eq:UiDi_assumption}. Let us therefore show that $\ao(P') \leq \ao(P)$, giving us a contradiction to our assumption that $P$ maximises $\ao(P)$ and minimises the number of $p_2,p_3$ in an $N$-structure
	
	Let $A \subset P'$ be a disjoint collection of chains of size $|A| = \ao(P')$ and let us show that we can find a disjoint collection of chains of equal size in $P$. We will do a case distinction based on whether $p_1'$, $p_x'$ and $p_k'$ are in $A$ or not. Note that by maximality of $A$, we cannot have $p_1',p_k' \in A$ but $p_x' \notin A$.

	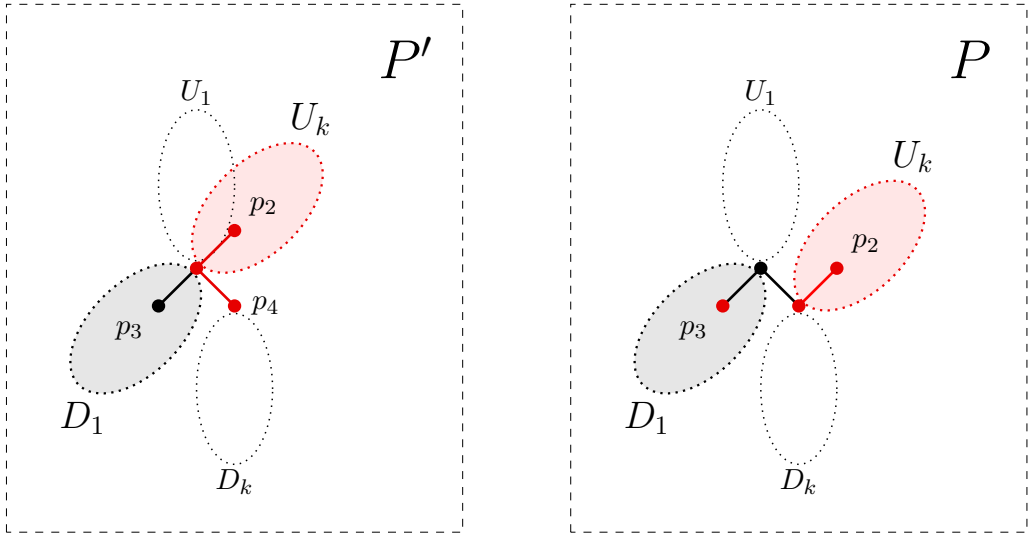
\begin{figure}[h]
	\bigskip
		\begin{center}
			\begin{minipage}{0.4\textwidth}
				\begin{tikzpicture}
\begin{scope}
	\coordinate (ZERO) at (0,0);
	\coordinate (R) at (0.5,0);
	\coordinate (L) at (-0.5,0);
	\coordinate (D) at (0,-0.5);
	\coordinate (U) at (0,0.5);
	\coordinate (UR) at (0.5,0.5);
	\coordinate (UL) at (-0.5,0.5);
	\coordinate (DR) at (0.5,-0.5);
	\coordinate (DL) at (-0.5,-0.5);
	\coordinate (P3) at ($(ZERO) + 2*(D) + (R)$);
	\coordinate (P1) at ($(P3) + (UR)$);
	\coordinate (P1') at (P1);
	\coordinate (P2') at ($(P1') + (DR)$);
	\coordinate (P4) at (P2');
	\coordinate (P2) at ($(P4) + (UR)$);
	\coordinate (P3M) at ($(P1') + (DL)$);
	\coordinate (P2M) at ($(P1') + (UR)$);
	\draw[rotate=45, black, fill=black!10, line width=0.3mm, dotted] ($(P1')+1.6*(DL)$) ellipse (1.5*1.41*0.5 and 0.6);
	\draw[rotate=45, black!10!red, fill=red!10, line width=0.3mm, dotted] ($(P1')+1.6*(UR)$) ellipse (1.5*1.41*0.5 and 0.6);
	\draw[black, line width=0.2mm, dotted] ($(P1')+(0,1.1)$) ellipse (0.5 and 1);
	\draw[black, line width=0.2mm, dotted] ($(P2')-(0,1.1)$) ellipse (0.5 and 1);
	\draw[dashed, line width=0.1mm] ($(ZERO) + 3*(L) + 8*(D)$) -- ($(ZERO) + 3*(L) + 6*(U)$) -- ($(ZERO) + 9*(R) + 6*(U)$) -- ($(ZERO) + 9*(R) + 8*(D)$) -- ($(ZERO) + 3*(L) + 8*(D)$);
	\node[label={\huge $P'$}] (char) at ($(ZERO) + 7.5*(R) +3.5*(U)$) {};
	\draw[black, line width=1pt] (P3M) -- (P1');
	\draw[black!10!red, line width=1pt] (P1') -- (P2');	
	\draw[black!10!red, line width=1pt] (P1') -- (P2M);			%
	\node[circle, fill=black, inner sep=1pt, minimum size=5pt, label=225:{$p_3$}] (char) at (P3M) {};
	\node[circle, fill=black!10!red, inner sep=1pt, minimum size=5pt, label=45:{$p_2$}] (char) at (P2M) {};
	\node[circle, fill=black!10!red, inner sep=1pt, minimum size=5pt, label=above:{}] (char) at (P1') {};
	\node[circle, fill=black!10!red, inner sep=1pt, minimum size=5pt, label=right:{$p_4$}] (char) at (P2') {};
	\node[label=above:{\Large $U_k$}] (char) at ($(P2M)+2*(UR)$){};
	\node[label=below:{\Large $D_1$}] (char) at ($(P3M)+2*(DL)$){};
	\node[label=above:{$U_1$}] (char) at ($(P1')+3.8*(U)$){};
	\node[label=below:{$D_k$}] (char) at ($(P2')+3.8*(D)$){};
\end{scope}
\end{tikzpicture}
			\end{minipage}
			\hspace{0.05\textwidth}
			\begin{minipage}{0.4\textwidth}
				\begin{tikzpicture}
\begin{scope}
	\coordinate (ZERO) at (0,0);
	\coordinate (R) at (0.5,0);
	\coordinate (L) at (-0.5,0);
	\coordinate (D) at (0,-0.5);
	\coordinate (U) at (0,0.5);
	\coordinate (UR) at (0.5,0.5);
	\coordinate (UL) at (-0.5,0.5);
	\coordinate (DR) at (0.5,-0.5);
	\coordinate (DL) at (-0.5,-0.5);
	\coordinate (P3) at ($(ZERO) + 2*(D) + (R)$);
	\coordinate (P1) at ($(P3) + (UR)$);
	\coordinate (P1') at (P1);
	\coordinate (P2') at ($(P1') + (DR)$);			\coordinate (P4) at (P2');
	\coordinate (P2) at ($(P4) + (UR)$);
	\draw[dashed, line width=0.1mm] ($(ZERO) + 3*(L) + 8*(D)$) -- ($(ZERO) + 3*(L) + 6*(U)$) -- ($(ZERO) + 9*(R) + 6*(U)$) -- ($(ZERO) + 9*(R) + 8*(D)$) -- ($(ZERO) + 3*(L) + 8*(D)$);
	\node[label={\huge $P$}] (char) at ($(ZERO) + 7.5*(R) +3.5*(U)$) {};
	\draw[rotate=45, black, fill=black!10, line width=0.3mm, dotted] ($(P3)+0.6*(DL)$) ellipse (1.5*1.41*0.5 and 0.6);
	\draw[rotate=45, black!10!red, fill=red!10, line width=0.3mm, dotted] ($(P2)+0.6*(UR)$) ellipse (1.5*1.41*0.5 and 0.6);
	\draw[black, line width=0.2mm, dotted] ($(P1')+(0,1.1)$) ellipse (0.5 and 1);
	\draw[black, line width=0.2mm, dotted] ($(P2')-(0,1.1)$) ellipse (0.5 and 1);
	\draw[black, line width=1pt] (P3) -- (P1');
	\draw[black, line width=1pt] (P1') -- (P2');		
	\draw[red, line width=1pt] (P2') -- (P2);
	\node[circle, fill=black!10!red, inner sep=1pt, minimum size=5pt, label=225:{$p_3$}] (char) at (P3) {};
	\node[circle, fill=black!10!red, inner sep=1pt, minimum size=5pt, label=45:{$p_2$}] (char) at (P2) {};
	\node[circle, fill=black, inner sep=1pt, minimum size=5pt, label=above:{}] (char) at (P1') {};
	\node[circle, fill=black!10!red, inner sep=1pt, minimum size=5pt, label=below:{}] (char) at (P2') {};
	\node[label=above:{\Large $U_k$}] (char) at ($(P2)+2*(UR)$){};
	\node[label=below:{\Large $D_1$}] (char) at ($(P3)+2*(DL)$){};
	\node[label=above:{$U_1$}] (char) at ($(P1')+3.8*(U)$){};
	\node[label=below:{$D_k$}] (char) at ($(P2')+3.8*(D)$){};
\end{scope}
\end{tikzpicture}
			\end{minipage}
		\end{center}
	\caption{The poset $P$ and its modification $P'$ illustrated for the case $k = 2$. The modification of $A$ to $A'$ in Case 5 is highlighted in red. Note that the modified version of the poset $P$ is $P'$, which is shown on the left, but the modified version of the set $A \subseteq P'$ is $A' \subseteq P$, which is shown on the right.}  \label{fig:k2P}
	\end{figure}

	\medskip
	\noindent {\bf Case 1.} If $p_1',p_x',p_k' \notin A$, then by maximality of $A$ we must have $A \cap \{p_1', \dots, p_k'\} = \emptyset$ and therefore $A$ is also a disjoint collection of chains in $P$.
	
	\medskip
	\noindent {\bf Case 2.} If $p_1' \in A $ but $p_x',p_k' \notin A$, then by maximality of $A$ we must have $A \cap \{p_1', \dots, p_k'\} = \{p_1', \dots, p_{x-1}'\}$. Also by maximality, we know that $p_1',\dots,p_{x-1}'$ are not part of the same chain as any point of $D_1$ in $P'$, as  $p_x' \notin A$. It follows that the same set $A$ is also a disjoint collection of chains in the poset $P$.
	
	\medskip
	\noindent {\bf Case 3.} If $p_x' \in A$ but $p_1',p_k' \notin A$, then by maximality of $A$ we must have $A \cap \{p_1', \dots, p_k'\} = \{p_{x}'\}$. While $p_x'$ in $A$ might be part of a chain that contains elements from $D_1$ and $U_k$, the set $A$ is still also a disjoint collection of chains in $P$. While the number of chains that $A$ consists of, when considered in $P$, may have increased, clearly the size of the set remains unchanged.
	
	\medskip
	\noindent {\bf Case 4.} If $p_1',p_x' \notin A$ but $p_k' \in A$, then by maximality of $A$ we must have $A \cap \{p_1', \dots, p_k'\} = \{p_{x+1}', \dots, p_k'\}$. As in Case 2, we know that $p_{x+1}',\dots,p_k'$ cannot be part of the same chain as any point of $U_k$, in $P'$, as otherwise we would have $p_x' \in A$. It follows that $A$ is again  a disjoint collection of chains in the poset $P$.
	
	\medskip
	\noindent {\bf Case 5.} If $p_1' \notin A$ but $p_x',p_k' \in A$, then by maximality of $A$ we must have $A \cap \{p_1', \dots, p_k'\} = \{p_{x}', \dots, p_k'\}$. Since $D_1$ is $V$-free and $x < k$, we also note that $A \cap D_1 = \emptyset$. If $ p_{x}', \dots, p_k'$ are not part of the same chain as any point in $U_k$, then $A$ is again a disjoint collection of chains in $P$. If however the chain to which $p_{x}', \dots, p_k'$ belong extends into $U_k$, then let $q_1, q_2, \dots , q_{h(D_1)}$ be a chain of length $h(D_1)$ in $D_1$ and observe that $A' = \big( A \setminus \{p_x',\dots, p_{k-1}'\} \big) \cup \{ q_1, q_2, \dots, q_{h(D_1)} \}$ is again a disjoint collection of chains in $P$ that is of size $|A'| = |A| - (k - x) + h(D_1) \geq |A|$ by \cref{eq:x}. See \cref{fig:k2P} for an illustration of this case when $k = 2$ and $h(D_1) = 1$.

	\begin{figure}[h]
	\bigskip
		\begin{center}
			\begin{minipage}{0.4\textwidth}
				\begin{tikzpicture}
\begin{scope}
	\coordinate (ZERO) at (0,0);
	\coordinate (R) at (0.5,0);
	\coordinate (L) at (-0.5,0);
	\coordinate (D) at (0,-0.5);
	\coordinate (U) at (0,0.5);
	\coordinate (UR) at (0.5,0.5);
	\coordinate (UL) at (-0.5,0.5);
	\coordinate (DR) at (0.5,-0.5);
	\coordinate (DL) at (-0.5,-0.5);
	\coordinate (P3) at ($(ZERO) + (L)$);
	\coordinate (P32) at ($(P3) + 1.5*(DL)$);
	\coordinate (P1) at ($(P3) + (UR)$);
	\coordinate (P1') at (P1);
	\coordinate (P2') at ($(P1') + (DR)$);
	\coordinate (P3') at ($(P2') + (DR)$);
	\coordinate (P4') at ($(P3') + (DR)$);
	\coordinate (P5') at ($(P4') + (DR)$);
	\coordinate (Px') at ($0.5*(P3') + 0.5*(P4')$);
	\coordinate (P4) at (P5');
	\coordinate (P2) at ($(P4) + (UR)$);
	\coordinate (P3M) at ($(Px') + (DL)$);
	\coordinate (P32M) at ($(P3M) + 1.5*(DL)$);
	\coordinate (P2M) at ($(Px') + (UR)$);
	\draw[dashed, line width=0.1mm] ($(ZERO) + 4*(L) + 9*(D)$) -- ($(ZERO) + 4*(L) + 7*(U)$) -- ($(ZERO) + 8.5*(R) + 7*(U)$) -- ($(ZERO) + 8.5*(R) + 9*(D)$) -- ($(ZERO) + 4*(L) + 9*(D)$);
	\node[label={\huge $P'$}] (char) at ($(ZERO) + 7*(R) +4.5*(U)$) {};
	\draw[rotate=45, black!10!red, fill=red!10, line width=0.3mm, dotted] ($(Px') + 1.6*(DL)$) ellipse (1.06 and 0.7);
	\node[label=below:{\Large $D_1$}] (char) at ($(P3M)+2*(DL)$){};
	\draw[rotate=45, black, fill=black!10, line width=0.3mm, dotted] ($(P2M)+0.6*(UR)$) ellipse (1.5*1.41*0.5 and 0.6);
	\node[label=above:{\Large $U_k$}] (char) at ($(P2M)+2*(UR)$){};
	\draw[black, line width=0.2mm, dotted] ($(P1')+(0,1.1)$) ellipse (0.5 and 1);
	\node[label=above:{$U_1$}] (char) at ($(P1')+3.8*(U)$){};
	\draw[black, line width=0.2mm, dotted] ($(P5')-(0,1.1)$) ellipse (0.5 and 1);
	\node[label=below:{$D_k$}] (char) at ($(P5')+3.8*(D)$){};
	\draw[black!10!red, line width=1pt] (Px') -- ($(P3M) + 0.5*(DL)$);
	\draw[black!10!red, dotted, line width=1pt] ($(P3M) + 0.5*(DL)$) -- (P32M);
	\draw[black!10!red, line width=1pt] (P1') -- ($0.5*(P2') + 0.5*(P3')$);
	\draw[black!10!red, dotted, line width=1pt] ($0.5*(P2') + 0.5*(P3')$) -- (Px');
	\draw[black, dotted, line width=1pt] (Px') -- ($0.5*(P4') + 0.5*(P5')$);
	\draw[black, line width=1pt] ($0.5*(P4') + 0.5*(P5')$) -- (P5');
	\draw[black, line width=1pt] (Px') -- (P2M);
	\node[circle, fill=black!10!red, inner sep=1pt, minimum size=5pt, label=above:{$p_3$}] (char) at (P3M) {};
	\node[circle, fill=black!10!red, inner sep=1pt, minimum size=5pt, label=above:{}] (char) at (P32M) {};
	\node[circle, fill=black, inner sep=1pt, minimum size=5pt, label=45:{$p_2$}] (char) at (P2M) {};
	\node[circle, fill=black!10!red, inner sep=1pt, minimum size=5pt, label=left:{$p_1'$}] (char) at (P1') {};
	\node[circle, fill=black!10!red, inner sep=1pt, minimum size=5pt, label=45:{$p_2'$}] (char) at (P2') {};
	\node[circle, fill=black!10!red, inner sep=1pt, minimum size=5pt, label=45:{}] (char) at (Px') {};
	\node[circle, fill=black, inner sep=1pt, minimum size=5pt, label=right:{$p_k'$}] (char) at (P5') {};
\end{scope}
\end{tikzpicture}

		
			\end{minipage}
			\hspace{0.05\textwidth}
			\begin{minipage}{0.4\textwidth}
				\begin{tikzpicture}
\begin{scope}
	\coordinate (ZERO) at (0,0);
	\coordinate (R) at (0.5,0);
	\coordinate (L) at (-0.5,0);
	\coordinate (D) at (0,-0.5);
	\coordinate (U) at (0,0.5);
	\coordinate (UR) at (0.5,0.5);
	\coordinate (UL) at (-0.5,0.5);
	\coordinate (DR) at (0.5,-0.5);
	\coordinate (DL) at (-0.5,-0.5);
	\coordinate (P3) at (ZERO);
	\coordinate (P32) at ($(P3) + 1.5*(DL)$);
	\coordinate (P1) at ($(P3) + (UR)$);
	\coordinate (P1') at (P1);
	\coordinate (P2') at ($(P1') + (DR)$);
	\coordinate (P3') at ($(P2') + (DR)$);
	\coordinate (P4') at ($(P3') + (DR)$);
	\coordinate (P5') at ($(P4') + (DR)$);
	\coordinate (Px') at ($0.5*(P3') + 0.5*(P4')$);
	\coordinate (P4) at (P5');
	\coordinate (P2) at ($(P4) + (UR)$);
	\draw[dashed, line width=0.1mm] ($(ZERO) + 4*(L) + 9*(D)$) -- ($(ZERO) + 4*(L) + 7*(U)$) -- ($(ZERO) + 10*(R) + 7*(U)$) -- ($(ZERO) + 10*(R) + 9*(D)$) -- ($(ZERO) + 4*(L) + 9*(D)$);
	\node[label={\huge $P$}] (char) at ($(ZERO) + 8.5*(R) +4.5*(U)$) {};
	\draw[rotate=45, black, fill=black!10, line width=0.3mm, dotted] ($(P1') + 1.6*(DL)$) ellipse (1.06 and 0.7);
	\node[label=below:{\Large $D_1$}] (char) at ($(P3)+2*(DL)$){};
	\draw[rotate=45, black, fill=black!10, line width=0.3mm, dotted] ($(P2)+0.6*(UR)$) ellipse (1.5*1.41*0.5 and 0.6);
	\node[label=above:{\Large $U_k$}] (char) at ($(P2)+2*(UR)$){};
	\draw[black, line width=0.2mm, dotted] ($(P1')+(0,1.1)$) ellipse (0.5 and 1);
	\node[label=above:{$U_1$}] (char) at ($(P1')+3.8*(U)$){};
	\draw[black, line width=0.2mm, dotted] ($(P5')-(0,1.1)$) ellipse (0.5 and 1);
	\node[label=below:{$D_k$}] (char) at ($(P5')+3.8*(D)$){};
	\draw[black, line width=1pt] (P1) -- ($(P3) + 0.5*(DL)$);
	\draw[black, dotted, line width=1pt] ($(P3) + 0.5*(DL)$) -- (P32);
	\draw[black!10!red, line width=1pt] (P1') -- ($0.5*(P2') + 0.5*(P3')$);
	\draw[black!10!red, dotted, line width=1pt] ($0.5*(P2') + 0.5*(P3')$) -- ($0.5*(P4') + 0.5*(P5')$);
	\draw[black!10!red, line width=1pt] ($0.5*(P4') + 0.5*(P5')$) -- (P5');
	\draw[black, line width=1pt] (P4) -- (P2);
	\node[circle, fill=black, inner sep=1pt, minimum size=5pt, label=above:{$p_3$}] (char) at (P3) {};
	\node[circle, fill=black, inner sep=1pt, minimum size=5pt, label=above:{}] (char) at (P32) {};
	\node[circle, fill=black, inner sep=1pt, minimum size=5pt, label=45:{$p_2$}] (char) at (P2) {};
	\node[circle, fill=black!10!red, inner sep=1pt, minimum size=5pt, label=above:{}] (char) at (P1') {};
	\node[circle, fill=black!10!red, inner sep=1pt, minimum size=5pt, label=45:{$p_2'$}] (char) at (P2') {};
	\node[circle, fill=black!10!red, inner sep=1pt, minimum size=5pt, label=45:{$p_x'$}] (char) at (Px') {};
	\node[circle, fill=black!10!red, inner sep=1pt, minimum size=5pt, label=below:{}] (char) at (P5') {};
\end{scope}
\end{tikzpicture}

		
			\end{minipage}
		\end{center}
	\caption{The poset $P$ and its modification $P'$ illustrated for general $k$. The modification of $A$ to $A'$ in Case 6 is highlighted in red.}  \label{fig:k3P}
	\end{figure}
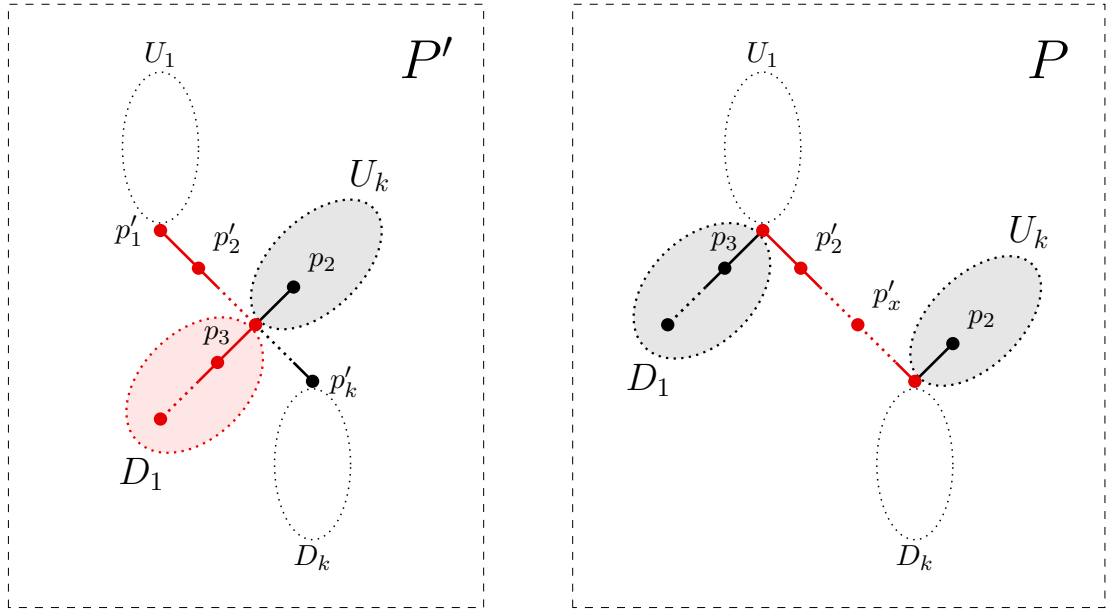

	\medskip
	\noindent {\bf Case 6.} If $p_1',p_x' \in A$ but $p_k' \notin A$, then by maximality of $A$ we must have $A \cap \{p_1', \dots, p_k'\} = \{p_1', \dots, p_x'\}$. If $p_1', \dots, p_x'$ are not part of the same chain as a point in $D_1$, then $A$ is again a disjoint collection of chains in $P$. If however the chain $\mathcal{C}$ to which $p_1', \dots, p_x'$ belong extends into $D_1$, then we must have $| \mathcal{C} \cap D_1 | = h(D_1)$ since $D_1$ is assumed to be $V$-free and $A$ is maximal. It follows that in $P$ we can extend the chain down to $p_k'$ instead of into $D_1$ without decreasing the size of the collection, that is $A' = \big( A \setminus ( \mathcal{C} \cap D_1 ) \big) \cup \{p_{x+1}', \dots, p_k'\}$ is a disjoint collection of chains in $P$ that is of size $|A'| = |A| - h(D_1) + k - x \geq |A|$ by \cref{eq:x}. See \cref{fig:k3P} for an illustration of this case.
	
	\medskip
	\noindent {\bf Case 7.} If $p_1',p_x',p_k' \in A$, then by maximality of $A$ we must have $\{p_1', \dots, p_x'\} \subset A$ and the chain containing $\{p_1', \dots, p_k'\}$ cannot contain any element of $U_k$ or $D_1$. It follows that $A$ is also a disjoint collection of chains in $P$.
	
	\medskip
	
	We have therefore shown that $P$ must be a connected and $N$-free poset $\mathcal{T}_n$ satisfying ${\ao(P)} = \ao(\mathcal{T}_n)$, concluding the proof.
\end{proof}

The next lemma shows that a connected, acyclic and $N$-free poset simply decomposes into two $V$-free posets, one of which is inverted, which are connected through a central element.

\begin{lemma} \label{lemma:central_element}
	For any connected, acyclic and $N$-free poset $P$ there exists some element $p' \in P$ such that any other element is comparable to it.
\end{lemma}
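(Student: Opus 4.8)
The plan is to reformulate everything in terms of the cover tree. Since $P$ is connected and acyclic, its cover graph is a tree, and in a tree the unique path between two elements realises comparability: $p \sim q$ exactly when that path is monotone. Hence $p \nsim q$ precisely when the path from $p$ to $q$ contains a \emph{turn}, i.e.\ an internal vertex that is either above both of its two path-neighbours (a \emph{peak}) or below both of them (a \emph{valley}). I will call a vertex an \emph{up-fork} if at least two elements cover it and a \emph{down-fork} if it covers at least two elements; the two covering (resp.\ covered) neighbours of a fork are automatically incomparable, so every valley is an up-fork and every peak is a down-fork.

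The heart of the argument is the following consequence of $N$-freeness, which I would prove first: \emph{no up-fork lies strictly below a down-fork}. Given such a pair $u < d$, the path from $u$ to $d$ leaves $u$ through one up-neighbour and enters $d$ through one down-neighbour; choosing a second up-neighbour $a$ of $u$ off this path and a second down-neighbour $c$ of $d$ off this path, the quadruple $(d,a,c,u)$ is an $N$-shape. Verifying the six required relations is routine: $d>c$, $d>u$ and $a>u$ are immediate, while $d\nsim a$, $a\nsim c$ and $c\nsim u$ hold because each corresponding path runs through $u$ or $d$ with a turn, hence is not monotone. Applying this to the dual poset (again acyclic and $N$-free, as the $N$-poset is self-dual) gives the statement with the roles of up- and down-forks exchanged.

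Next I would characterise the elements that work. Deleting a fork $f$ from the tree splits it into components, one per neighbour. I claim $p'$ is comparable to every element if and only if, for each up-fork $v$, the element $p'$ lies in no component reached through an up-neighbour of $v$, and dually for each down-fork. Necessity is clear from the peak/valley description (an element sitting in a forbidden branch is separated from a fork-neighbour on the other side by a turn), and sufficiency holds because any witnessing turn would exhibit a fork with $p'$ on its forbidden side. Each constraint says that $p'$ must lie in a prescribed subtree $T_f$ (the fork together with its allowed branches), so the lemma reduces to proving $\bigcap_f T_f \neq \emptyset$.

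Finally I would invoke the Helly property for subtrees of a tree: a family of subtrees with pairwise nonempty intersections has a common vertex. It thus suffices to show any two of the $T_f$ meet, and this is exactly where the key lemma enters. If two down-forks had disjoint allowed regions, the path joining them would descend from one and ascend into the other, forcing a valley $m$; then $m$ is an up-fork lying strictly below both down-forks, contradicting the key lemma. The two-up-fork case is dual, and in the mixed case the path between the up-fork $v$ and the down-fork $w$ leaves $v$ upward and enters $w$ from below, so it either is monotone (giving $v<w$) or has a first peak $P$ with $v<P$; in both cases an up-fork sits below a down-fork, again a contradiction. The main obstacle is precisely this combination of the key lemma and the pairwise-intersection step: once $N$-freeness is seen to forbid an up-fork below a down-fork, the central element $p'$ falls out of the tree's Helly property.
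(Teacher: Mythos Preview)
Your proof is correct, but it follows a genuinely different route from the paper's.

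The paper gives a direct, constructive argument: start from a minimal element $p_0$, walk up through unique covers until you reach the first element $p_k$ that is either maximal or has at least two covers, and show $p' = p_k$ works. A single contradiction, split into two short cases depending on whether a hypothetical incomparable element shares a common lower or upper bound with $p_k$, produces an explicit $N$-shape. No auxiliary lemma, no Helly property, and the whole thing fits in a paragraph.

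Your approach is more structural. You isolate a key lemma (no up-fork lies strictly below a down-fork), then characterise \emph{all} central elements as the intersection of subtrees $T_f$ indexed by forks, and finally invoke the Helly property of subtrees of a tree after checking pairwise intersections. This buys you more: you obtain a description of the entire set of central elements rather than just one, and the ``no up-fork below a down-fork'' lemma is a clean standalone fact that encapsulates exactly how $N$-freeness interacts with acyclicity. The cost is length and the need to import the Helly property.

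One small remark: your appeal to duality for the key lemma is redundant, since the statement ``no up-fork strictly below a down-fork'' is already self-dual; you correctly use only this one form in the pairwise-intersection step. Also, in the two-down-fork case you write that the valley $m$ lies below \emph{both} down-forks, but you only need (and only argue) that it lies below one of them, which already contradicts the key lemma.
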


\begin{proof} 
	Let $p_0$ be a minimal element in $P$. Let $p_0, p_1, \dots, p_k$ be a sequence of elements in $P$ for some $k \geq 0$ such that $p_{i+1}$ is the lone element covering $p_i$ for any $0 \leq i < k$ and $p_k$ is the first element in this sequence either not covered at all or covered by at least two distinct elements. Let us show that $p' = p_k$ fulfils the desired property. Assume to the contrary that there is some $p \nsim p_k$. Since $P$ is connected and acyclic, $p_k$ and $p$ share either a common smaller or a common greater element $q$.
	
	\medskip
	\noindent \textbf{Case 1.} If $q$ is a common smaller element of $p_k$ and $p$, then $k > 0$ since otherwise we get a contradiction to the assumption that $p_0$ is a minimal element in $P$. We note that $q \nsim p_0$ since our poset is acyclic. It follows that $(p_k, p, p_0, q)$ form an $N$-shape, a contradiction.
	
	\medskip
	\noindent \textbf{Case 2.} If $q$ is a common greater element of $p_k$ and $p$, then $p_k$ has at least two distinct elements covering it, one of which must be incomparable to $p$ since our poset is acyclic. Denote this element by $q'$ and note that $q' \nsim q$ since both cover $p_k$. It follows that $(q',q,p_k,p)$ also form an $N$-shape, again a contradiction.
\end{proof}

Let us introduce some final bit of notation.

\begin{definition}
	For any $a \geq 0$ and $h \geq 1$, we write
	\begin{align*}
		\Lambda(a,h) & = \max \big\{ \abs{P} : P \text{ is a }  V\text{-free poset with } \ao(P) = a \text{ and } h(P) \leq h \big\} \text{ and} \\
		X(a) & = \max \big\{ \abs{P} : P \text{ is an acyclic and } N\text{-free poset with } \ao(P) = a \big\}.
	\end{align*}
	When $h = a$, we simply write $\Lambda(a) = \Lambda(a,a)$.
\end{definition}


Note that if $h > a$, then clearly $\Lambda(a,h) = \Lambda(a,a) = \Lambda(a)$. Our goal will ultimately be to determine $X(a)$ in terms of $\Lambda(a)$ and to find a closed expression for both. The following three statements first establish how to determine $\Lambda(a)$.

\begin{proposition} \label{prop:lambda}
	For any $a \geq 2$ we have 
	\begin{equation} \label{eq:lambda_max}
		\Lambda(a) =  \max \big\{ \Lambda (f) + \Lambda (a-f)  + a-f : a/2 \leq f < a \big\}.
	\end{equation}
	%
\end{proposition}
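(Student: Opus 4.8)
The plan is to reduce everything to the structure theory of $V$-free posets. First I would record the key observation that a poset is $V$-free exactly when its cover graph is a rooted forest: forbidding an element below two incomparable ones says precisely that the up-set of every element is a chain, i.e.\ that every element has at most one cover, so comparability is the ancestor relation. In such a forest $\ao$ and $h$ are computed recursively. For a tree $T$ with root $r$ whose children induce subtrees $T_1,\dots,T_m$ one has $h(T)=1+\max_i h(T_i)$ and $\ao(T)=\max\big(h(T),\sum_i\ao(T_i)\big)$: a collection of incomparable chains either uses $r$, in which case (as $r$ is comparable to everything) it collapses to a single chain of length $\le h(T)$, or it avoids $r$ and splits over the mutually incomparable subtrees $T_i$. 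For a disconnected forest $\ao$ is additive and $h$ is the maximum over components. Every subsequent step is an application of these identities, together with $h(T_i)\le\ao(T_i)$.

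For the inequality $\Lambda(a)\ge\Lambda(f)+\Lambda(a-f)+(a-f)$ I would exhibit an explicit construction for each $a/2\le f<a$. Take extremal witnesses $P_1$ with $\ao(P_1)=f$, $h(P_1)\le f$, $|P_1|=\Lambda(f)$ and $P_2$ with $\ao(P_2)=a-f$, $h(P_2)\le a-f$, $|P_2|=\Lambda(a-f)$, introduce a fresh chain $t_1>\dots>t_{a-f}$ of $a-f$ new elements sitting above everything, and let $t_{a-f}$ cover the roots of $P_1$ and $P_2$. The result is again a rooted forest, hence $V$-free; its height is $(a-f)+\max(h(P_1),h(P_2))\le(a-f)+f=a$; and its $\ao$ is exactly $a$, since every element lies below $t_{a-f}$ and so is comparable to the whole trunk, meaning any collection that uses a trunk element is a single chain of length $\le a$, while any collection avoiding the trunk lives in the incomparable union $P_1\sqcup P_2$ and has size at most $f+(a-f)=a$.

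For the reverse inequality I would take an extremal $V$-free $P$ with $\ao(P)=a$ and $h(P)\le a$. If $P$ is disconnected I split its components into two nonempty groups; since the component $\ao$-values are positive and sum to $a$, one group has value $f\ge a/2$ and the other $a-f$, giving $|P|\le\Lambda(f)+\Lambda(a-f)\le\Lambda(f)+\Lambda(a-f)+(a-f)$. So assume $P$ is a single tree. It cannot be a chain, since for $a\ge2$ appending a sibling leaf strictly increases the size while keeping $\ao=a$, so $P$ has a branch vertex $b$ with $m\ge2$ subtrees $T_1,\dots,T_m$ of orders $a_i=\ao(T_i)$ and heights $h(T_i)\le a_i$. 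The same leaf-appending argument at $b$ forces, by extremality, $\sum_i a_i=a$. Writing $H=\max_i h(T_i)$ and noting that the part of the trunk strictly above $b$ has at most $a-1-H$ elements (from $h(P)\le a$), I obtain $|P|\le a-H+\sum_i|T_i|$.

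Finally I would split the subtrees into two nonempty groups $\mathcal G_1,\mathcal G_2$ with $\ao$-sums $p\ge q$ and $p+q=a$ (so automatically $p\ge a/2$), and bound each group by reconstituting it as a witness: placing a trunk of length $p-H_1\ge0$ above the forest $\bigcup_{\mathcal G_1}T_i$, where $H_j=\max_{i\in\mathcal G_j}h(T_i)$, produces a $V$-free poset of $\ao$ exactly $p$ and height $p$, so $\sum_{\mathcal G_1}|T_i|\le\Lambda(p)-p+H_1$, and symmetrically $\sum_{\mathcal G_2}|T_i|\le\Lambda(q)-q+H_2$. Substituting and using $p+q=a$ telescopes the linear terms, leaving $|P|\le\Lambda(p)+\Lambda(q)+(H_1+H_2-H)=\Lambda(p)+\Lambda(q)+\min(H_1,H_2)$, and the proof closes via $\min(H_1,H_2)\le H_2\le\max_{i\in\mathcal G_2}a_i\le q=a-p$. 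I expect the bookkeeping here to be the main obstacle rather than any single inequality: the reduction $\sum_i a_i=a$ is exactly what forces the split parameter $f=p$ into the admissible range $f\ge a/2$, and the cancellation that leaves precisely $\min(H_1,H_2)$, combined with the bound $\min(H_1,H_2)\le a-f$, is what reproduces the extra additive term $a-f$ in the statement. Making these two features align simultaneously is the crux of the upper bound.
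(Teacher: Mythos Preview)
Your argument is correct. The lower-bound construction matches the paper's exactly, but your upper bound takes a genuinely different route.

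The paper first forces $h(P)=a$ (hence connectedness), picks a maximum chain $p_1>\cdots>p_a$, and lets $p_x$ be the highest branching element along it; then, among all extremal $P$, it selects one \emph{minimising} $x$ and uses this extra minimality to pin down $\ao(P_1)=a-x$ for the subtree $P_1$ containing the rest of the chain, whence $|P|\le x+\Lambda(a-x)+\Lambda(x)$. Your argument instead makes the rooted-forest structure of $V$-free posets explicit, derives the recursion $\ao(T)=\max\big(h(T),\sum_i\ao(T_i)\big)$, and uses a direct leaf-appending extremality step at the top branch vertex $b$ to get $\sum_i a_i=a$; you then allow an \emph{arbitrary} bipartition of the subtrees and close with the height inequality $\min(H_1,H_2)\le q=a-f$. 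This avoids the paper's ``minimise $x$ over all extremal posets'' device and the somewhat delicate relocation of $p_1$, at the cost of a little extra bookkeeping with $H,H_1,H_2$. One small point worth making explicit in a write-up: your ``trunk strictly above $b$'' only makes sense if $b$ is the \emph{highest} branch vertex, so you should say so; and in the disconnected case the bound $|\text{group}|\le\Lambda(f)$ uses $h(\text{group})\le\max_i\ao(\text{component}_i)\le f$, which you gloss over but which does hold.
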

\begin{proof} 

	\medskip
	\noindent \textbf{A lower bound} follows by considering the following construction: given any $a/2 \leq f < a$, take a $V$-free poset $P_1$ satisfying $\abs{P_1} = \Lambda (f)$ and $\ao(P_1) = f$ as well as a $V$-free poset $P_2$ satisfying $\abs{P_2} = \Lambda (a-f)$ and $\ao(P_2) = a-f$. It is easy to see that the poset $P$ obtained by taking the disjoint union of $P_1$ and $P_2$ and adding $k = a-f$ elements $p_1,\dots,p_{k}$ such that $p_{k} > \dots > p_2 > p_1$ as well as $p_1 > p$ for all $p \in P_1,P_2$ satisfies $\abs{P} = a-f+ \Lambda (f) + \Lambda (a-f)$, $h(P)=a$ and $\ao(P) = \ao(P_1) + \ao(P_2) = a$ as desired. 
	
	\medskip
	
	\noindent For \textbf{a matching upper bound}, let $P$ be a poset satisfying $\abs{P} = \Lambda(a)$ and $\ao(P) = a$. We must also have $h(P)=a$, as otherwise one could add an additional element to $P$ that is greater than every other element, which increases the size of $P$ without increasing $\ao(P)$. This also establishes that $P$ is connected, as the sum of the heights of two disjoint parts would need to be at most $a$.

	Consider therefore a chain $p_1 > p_2 > \dots > p_a$ of length $a$ in $P$. Note that, by the previous construction, $\abs{P} > a$ since $a \geq 2$. Let $p_x$ denote the largest element in that chain which covers at least two distinct elements in $P$ and assume that $P$ was chosen such that $x$ (the integer, not the poset element $p_x$) is minimal. By removing $p_1, \dots, p_x$ from $P$ we obtain a poset consisting of at least two incomparable parts. Let $P_1$ be the part containing the rest of the chain and $P_2$ the rest of $P$ without the initial $x$ elements. The height of $P_1$ is $a-x$ and therefore $\ao(P_1) \geq a-x$. If $\ao(P_1) > a-x$ then trivially $x>1$ and by moving $p_1$ (the largest element of $P$) to be in between $p_x$ and the maximal elements of $P_1$ in the Hasse diagram of $P$, we obtain a contradiction to our assumption that $x$ is minimal.  It follows that $\ao(P_1) = a-x$. Since $P_1$ and $P_2$ are incomparable, we have $\ao(P_1) + \ao(P_2) \leq a$ which implies that $\ao(P_2) \leq x$ and therefore 
	\begin{equation*}
		\abs{P} = x + \abs{P_1} + \abs{P_2} \leq x+ \Lambda(a-x) + \Lambda(x).
	\end{equation*}
	This establishes the statement of the proposition.
\end{proof}

Let us show that the formula in \cref{prop:lambda} is maximised by $f = \ceil{a/2}$.

\begin{lemma}\label{half}
	For any $a \geq 2$, we have
	\begin{equation*}
		\Lambda(a) = \Lambda(\ceil{a/2}) + \Lambda (\floor{a/2})  + \floor{a/2}.
	\end{equation*}
\end{lemma}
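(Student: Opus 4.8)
The plan is to read the statement off \cref{prop:lambda}, which already gives $\Lambda(a) = \max\{\Lambda(f)+\Lambda(a-f)+a-f : a/2 \le f < a\}$. Since $f=\ceil{a/2}$ produces exactly the right-hand side of the lemma, the whole task reduces to showing that the maximum of $g(f) := \Lambda(f)+\Lambda(a-f)+(a-f)$ is attained at the smallest admissible $f$, namely $f=\ceil{a/2}$. I would run a strong induction on $a$. It is worth stressing at the outset that a one-step smoothing argument cannot succeed: writing $s=a-f$ and regarding $g$ as a function of $s \in \{1,\dots,\floor{a/2}\}$, one finds $g(s+1)-g(s) = 1+\delta(s)-\delta(a-1-s)$ with $\delta(m):=\Lambda(m+1)-\Lambda(m)$, and this expression changes sign (already at $a=12$ the map $s \mapsto g(s)$ is not monotone), so $s=\floor{a/2}$ must be shown to be a maximizer even though $g$ oscillates.

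To tame the oscillation I would prove, by the same induction and simultaneously with the lemma, the closed form $\Lambda(m)=m+\Sigma(m)$, where $\Sigma(m)=\sum_{j=0}^{m-1} \nu(j)$ and $\nu(j)$ is the number of ones in the binary expansion of $j$. Assuming this for all arguments below $a$, \cref{prop:lambda} lets me substitute the closed form into each term (every $f$ and $a-f$ lies strictly between $0$ and $a$), turning the optimization into $\Lambda(a)=a+\max_{1\le s\le\floor{a/2}}\big[\Sigma(a-s)+\Sigma(s)+s\big]$. Two purely arithmetic facts about $\Sigma$ then remain: that this maximum sits at $s=\floor{a/2}$ (this is the lemma), and that its value there equals $\Sigma(a)$ so that the closed form propagates upward. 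The latter is the identity $\Sigma(\ceil{a/2})+\Sigma(\floor{a/2})+\floor{a/2}=\Sigma(a)$, immediate from the doubling relations $\Sigma(2n)=2\Sigma(n)+n$ and $\Sigma(2n+1)=\Sigma(2n)+\nu(n)$, which one gets by pairing each $j<n$ with $2j$ and $2j+1$.

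The heart of the matter is therefore the digit-sum inequality $\Sigma(a-s)+\Sigma(s)+s \le \Sigma(\ceil{a/2})+\Sigma(\floor{a/2})+\floor{a/2}$ for all $1\le s\le\floor{a/2}$. Putting $M=\floor{a/2}$ and $c=\ceil{a/2}-s\ge 0$, the interval $[\ceil{a/2},\,a-1-s]$ is precisely $[s,\,M-1]$ shifted up by $c$, so the inequality telescopes to $\sum_{t=s}^{M-1}\big[\nu(t+c)-\nu(t)\big] \le M-s$; that is, adding $c$ to the $M-s$ consecutive integers in $[s,M-1]$ raises the digit sum by at most one on average. I would establish this through the base-two carry identity $\nu(t+c)=\nu(t)+\nu(c)-\mathrm{carry}(t,c)$, where $\mathrm{carry}(t,c)$ counts the carries in the binary addition $t+c$; the inequality then becomes the lower bound $(M-s)\big(\nu(c)-1\big) \le \sum_{t=s}^{M-1}\mathrm{carry}(t,c)$ on the total number of carries.

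I expect this carry estimate to be the main obstacle. It is immediate when $c=0$ or $c$ is a power of two, since then $\nu(c)\le 1$ and the left side is nonpositive; for general $c$ one must exploit the particular window $[s,M-1]$ and shift $c=\ceil{a/2}-s$, using that $t+c$ sweeps exactly $[\ceil{a/2},\,a-1-s]$ and never reaches $a$, to guarantee that enough carries occur. As a fallback that avoids digit sums altogether, I would keep in reserve an extremal argument: choose the \emph{largest} maximizer $s^\ast$ of $g$, suppose $s^\ast<\floor{a/2}$, expand $\Lambda(a-s^\ast)$ and $\Lambda(s^\ast)$ one level using the inductive recurrence, and try to recombine the pieces into a strictly more balanced split of value at least $g(s^\ast)$, contradicting the maximality of $s^\ast$. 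This is logically equivalent to the carry count but can be written without ever mentioning binary representations.
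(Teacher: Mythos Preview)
Your primary route through the closed form $\Lambda(m)=m+\Sigma(m)$ is correct as a formula (it agrees with the paper's later \cref{cor:L}), but you explicitly leave the carry estimate $(M-s)(\nu(c)-1)\le\sum_{t=s}^{M-1}\mathrm{carry}(t,c)$ unproven, so as it stands the argument has a genuine gap at exactly the point that carries all the weight. Proving the closed form simultaneously with the lemma is also heavier than needed: in the paper the closed form is a \emph{consequence} of this lemma and \cref{2power}, not an input to it.

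What you relegate to a ``fallback'' is in fact the paper's proof, and it is short enough that it should be the main line rather than a reserve. The recombination is completely explicit and needs no extremal choice of maximizer. For any $f$ with $a/2\le f<a$, apply the inductive hypothesis to both $\Lambda(f)$ and $\Lambda(a-f)$, producing four $\Lambda$-terms with arguments $\ceil{f/2},\floor{f/2},\ceil{(a-f)/2},\floor{(a-f)/2}$. If at least one of $f,a-f$ is even, these four integers pair off as
\[
\ceil{f/2}+\ceil{(a-f)/2}=\ceil{a/2},\qquad \floor{f/2}+\floor{(a-f)/2}=\floor{a/2},
\]
and two applications of \cref{prop:lambda} (to $\ceil{a/2}$ and to $\floor{a/2}$, each time with the smaller summand playing the role of $a-f$) bound each pair by $\Lambda(\ceil{a/2})-\ceil{(a-f)/2}$ and $\Lambda(\floor{a/2})-\floor{(a-f)/2}$; the residual constants collapse to $\floor{a/2}$. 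If both $f$ and $a-f$ are odd (so $a$ is even), pair crosswise via $\ceil{f/2}+\floor{(a-f)/2}=\floor{f/2}+\ceil{(a-f)/2}=a/2$ and the same two applications of \cref{prop:lambda} give a strict inequality. No digit sums, no carries, no ``largest maximizer'' argument: two invocations of \cref{prop:lambda} at the half-level do all the work.
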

\begin{proof}
	The proof follows by induction on $a$. The base case $a=2$ is easy to see. Let us therefore show that the statement holds for $a$ assuming it has been proven for all smaller values.
	
	By inductive assumption, we have
	\begin{align*}
		\Lambda(f) &= \Lambda(\ceil{f/2}) + \Lambda (\floor{f/2})  + \floor{f/2} \text{ and} \\
		\Lambda (a-f) &= \Lambda(\ceil{(a-f)/2}) + \Lambda (\floor{(a-f)/2})  + \floor{(a-f)/2}.
	\end{align*}
	Note that this holds even if $f = 1$ or $a-f = 1$ since $\Lambda(0) = 0$. Let us distinguish two cases based on the parity of $f$ and $a-f$.
	
	\medskip
	\noindent{\bf Case 1.} If at least one of $f$ and $a-f$ is even, we have that $\ceil{(a-f)/2} + \ceil{f/2} = \ceil{a/2}$ as well as $\floor{(a-f)/2} + \floor{f/2} = \floor{a/2}$. Using the inductive hypothesis for $\ceil{a/2}$ and $\floor{a/2}$, it therefore follows that
	\begin{align*}
		& \quad \Lambda(f) + \Lambda (a-f)  + a-f \\
		& \leq \Lambda ( \ceil{a/2} ) - \ceil{(a-f)/2} + \Lambda ( \floor{a/2} ) - \floor{(a-f)/2}  + \floor{f/2} + \floor{(a-f)/2} + (a-f) \\
		& = \Lambda ( \ceil{a/2} ) + \Lambda ( \floor{a/2} ) + \floor{a/2}.
	\end{align*}
	Here we have used \cref{prop:lambda} and that $\ceil{(a-f)/2} + \floor{(a-f)/2} = a-f$ and $\floor{f/2} + \floor{(a-f)/2} = \floor{a/2}$.
	
	\medskip
	\noindent{\bf Case 2.} If both $f$ and $a-f$ are odd, so that $a$ must be even, then $\ceil{f/2} + \floor{(a-f)/2} = \floor{f/2} + \ceil{(a-f)/2} = a/2$. Also note that $\ceil{(a-f)/2} \leq \floor{f/2}$ as $f > a-f$	. Using the inductive hypothesis for $a/2$, it therefore again  follows that
	\begin{align*}
		& \quad \Lambda(f) + \Lambda (a-f)  + a-f \\
		& \leq 2 \, \Lambda ( a/2 ) - \ceil{(a-f)/2} - \floor{(a-f)/2}  + (\floor{a/2} - 1) + (a-f) \\
		& < \Lambda ( \ceil{a/2} ) + \Lambda ( \floor{a/2} ) + \floor{a/2}.
	\end{align*}
	Here we have again used that \cref{prop:lambda} and that $\floor{f/2} + \floor{(a-f)/2} = \floor{a/2} - 1$.
	%
%
%
\end{proof}

Let us now show that the formula in \cref{prop:lambda} is also maximised by $f = 2^{\ceil{\log_2 a} - 1}$.

\begin{proposition}\label{2power}
	For any $a \geq 2$, we have
	\begin{equation*}
		\Lambda(a) = \Lambda(2^{\ceil{\log_2 a}-1}) + \Lambda (a-2^{\ceil{\log_2 a}-1})  + (a-2^{\ceil{\log_2 a}-1}).
	\end{equation*}
	%
\end{proposition}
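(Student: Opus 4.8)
The plan is to argue by strong induction on $a$, with \cref{half} doing all the real work. When $a$ is a power of two, say $a = 2^m$, we have $2^{\ceil{\log_2 a}-1} = 2^{m-1} = \ceil{a/2}$, so the claimed identity is literally the statement of \cref{half} and there is nothing to do. Hence I would immediately reduce to the case where $a$ is not a power of two, writing $m = \ceil{\log_2 a}$ so that $2^{m-1} < a < 2^m$ and in particular $m \geq 2$. Setting $t = 2^{m-1}$ and $r = a - t$, so that $1 \leq r \leq t-1$, the entire goal collapses to the single identity $\Lambda(a) = \Lambda(t) + \Lambda(r) + r$; note the inequality $\Lambda(a) \geq \Lambda(t) + \Lambda(r) + r$ is already free from \cref{prop:lambda}, but I intend to prove equality directly, so I will not even need that.

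First I would apply \cref{half} to $a$ itself, giving $\Lambda(a) = \Lambda(\ceil{a/2}) + \Lambda(\floor{a/2}) + \floor{a/2}$. Since $t = 2^{m-1}$ is even, the two halves are $\ceil{a/2} = 2^{m-2} + \ceil{r/2}$ and $\floor{a/2} = 2^{m-2} + \floor{r/2}$, and the bound $1 \leq r \leq t-1$ forces both to lie in the interval $[2^{m-2}, 2^{m-1}]$. The key claim I would establish for each half $v \in \{\ceil{a/2}, \floor{a/2}\}$ is the single-step identity $\Lambda(v) = \Lambda(2^{m-2}) + \Lambda(v - 2^{m-2}) + (v - 2^{m-2})$, i.e. that $2^{m-2}$ is a valid split for $v$. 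Whenever $v > 2^{m-2}$ this is exactly the inductive hypothesis — the present proposition applied to the value $v < a$, whose prescribed top power $2^{\ceil{\log_2 v}-1}$ is precisely $2^{m-2}$ — and the only remaining case $v = 2^{m-2}$ (which occurs solely for the lower half when $r=1$) holds trivially.

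With both expansions in hand, what remains is bookkeeping: I would sum $\Lambda(\ceil{a/2}) = \Lambda(2^{m-2}) + \Lambda(\ceil{r/2}) + \ceil{r/2}$, the analogous floor identity, and the leftover term $\floor{a/2} = 2^{m-2} + \floor{r/2}$, then regroup. The terms $2\,\Lambda(2^{m-2}) + 2^{m-2}$ collect to $\Lambda(2^{m-1}) = \Lambda(t)$ by \cref{half} at the power of two $2^{m-1}$; the terms $\Lambda(\ceil{r/2}) + \Lambda(\floor{r/2}) + \floor{r/2}$ collect to $\Lambda(r)$ by \cref{half} applied to $r$; and the surviving $\ceil{r/2} + \floor{r/2}$ is exactly $r$. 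This produces $\Lambda(a) = \Lambda(t) + \Lambda(r) + r$, which is the assertion.

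The main obstacle, as usual for such parity arguments, is verifying that the degenerate boundary cases line up with the generic computation rather than any genuine difficulty. When $r = 1$ the lower half equals $2^{m-2}$ exactly, so the half is itself a power of two; there the claimed single-step identity degenerates to $\Lambda(2^{m-2}) = \Lambda(2^{m-2}) + \Lambda(0) + 0$, which holds under the convention $\Lambda(0) = 0$ already used in \cref{half}. When $a = 2^m - 1$ the upper half equals $2^{m-1}$ exactly, and there the identity reduces once more to \cref{half} at $2^{m-1}$. Finally, the regrouping step $\Lambda(\ceil{r/2}) + \Lambda(\floor{r/2}) + \floor{r/2} = \Lambda(r)$ is \cref{half} for $r \geq 2$ and is trivially true for $r = 1$. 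Once these endpoints are pinned down and the parity of $t$ is used to split $a/2$ cleanly, the identity falls out mechanically.
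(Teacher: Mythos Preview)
Your proof is correct and follows essentially the same route as the paper's: apply \cref{half} to $a$, use the inductive hypothesis on each half to peel off the factor $2^{m-2}$, and then regroup via \cref{half} applied to $2^{m-1}$ and to $r$. The only cosmetic difference is that the paper disposes of the two boundary values $a\in\{2^{\alpha+1}-1,\,2^{\alpha+1}\}$ at the outset (where the statement already coincides with \cref{half}), while you instead track the endpoints $r=1$ and $\ceil{a/2}=2^{m-1}$ inside the main computation; your handling of these degenerate cases is in fact more explicit than the paper's.
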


\begin{proof}
    The proof again follows by induction on $a$, where the case $a=2$ is easy to see. Let $\alpha = \ceil{\log_2 a} -1$, so that $2^{\alpha} <  a \leq 2^{\alpha+1}$. We may in fact assume that $2^{\alpha} <  a < 2^{\alpha+1}-1$, as otherwise $2^{\alpha-1} = \ceil{a/2}$, so that the result otherwise follows by \cref{half}. It follows that $2^{\alpha-1} \leq \floor{a/2} \leq \ceil{a/2} < 2^{\alpha}$, so that by inductive assumption 
    %
    and by \cref{half}
    \begin{align*}
    \Lambda(a) & = \Lambda\left(\ceil{a/2}\right) + \Lambda\left(\floor{a/2}\right) + \floor{a/2} \\
     & = \parr{2^{\alpha-1}+ 2\Lambda\left(2^{\alpha-1}\right)} +  \parr{\floor{\frac{a}{2}}-2^{\alpha-1} + \Lambda\left(\floor{\frac{a}{2}}-2^{\alpha-1}\right) + \Lambda\left(\ceil{\frac{a}{2}}-2^{\alpha-1}\right)} + \parr{a- 2^{\alpha-1}} \\
    & =  \Lambda\parr{2^{\alpha}} + \Lambda\parr{a- 2^{\alpha}} + (a- 2^{\alpha}),
    \end{align*}
    where the last equality follows from \cref{half}.
\end{proof}


The following corollary establishes an exact formula for $\Lambda(a)$ based on the binary representation of $a$.

\begin{corollary} \label{cor:L}
	Let $a \geq 1$. If $a = 2^{i_0} + 2^{i_1} + \cdots + 2^{i_{t-1}}$ for some $i_0 < i_1 < \cdots < i_{t-1}$, then
	\begin{equation}
		\Lambda(a) = \sum_{k=0}^{t-1} (2t-2k +i_k) \, 2^{i_k-1} 
	\end{equation}
	In particular, if $a = 2^k$ for some $k \geq 1$, then $\Lambda(2^k) = 2^{k-1} \, (k+2)$.
\end{corollary}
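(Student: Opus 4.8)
The plan is to prove the closed formula by strong induction on $a$, driving the induction with the recursion from \cref{2power}, which peels off the highest power of two in the binary expansion of $a$ (and halves $a$ when $a$ is itself a power of two). First I would settle the base case $a = 1 = 2^0$: here $\Lambda(1) = 1$, and the claimed formula evaluates to $(2 \cdot 1 - 0 + 0) \cdot 2^{-1} = 1$, so it holds. For the inductive step, write $a = 2^{i_0} + \cdots + 2^{i_{t-1}}$ with $i_0 < \cdots < i_{t-1}$ and set $\alpha = \ceil{\log_2 a} - 1$ as in \cref{2power}. I would observe that $2^{\alpha} = 2^{i_{t-1}}$ whenever $t \geq 2$, since then $2^{i_{t-1}} < a < 2^{i_{t-1}+1}$, whereas $2^{\alpha} = a/2$ when $t = 1$.

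The power-of-two case ($t=1$, $a = 2^m$) is handled on its own, and it simultaneously yields the ``in particular'' statement. Here \cref{2power} reads $\Lambda(2^m) = 2 \, \Lambda(2^{m-1}) + 2^{m-1}$; feeding in the inductive value $\Lambda(2^{m-1}) = (m+1) \, 2^{m-2}$ gives $\Lambda(2^m) = (m+1) \, 2^{m-1} + 2^{m-1} = (m+2) \, 2^{m-1}$, which is exactly the $t = 1$ instance of the general formula, namely $(2 - 0 + m) \, 2^{m-1}$.

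For $t \geq 2$, the recursion gives $\Lambda(a) = \Lambda(2^{i_{t-1}}) + \Lambda(a - 2^{i_{t-1}}) + (a - 2^{i_{t-1}})$. The number $a - 2^{i_{t-1}}$ has binary expansion $\sum_{k=0}^{t-2} 2^{i_k}$ with exactly $t-1$ ones, so the inductive hypothesis supplies $\Lambda(a - 2^{i_{t-1}}) = \sum_{k=0}^{t-2} (2(t-1) - 2k + i_k) \, 2^{i_k - 1}$ together with $\Lambda(2^{i_{t-1}}) = (i_{t-1} + 2) \, 2^{i_{t-1}-1}$. The core of the argument is to match these against the target sum $\sum_{k=0}^{t-1}(2t - 2k + i_k) \, 2^{i_k-1}$: its $k = t-1$ term is precisely $(2 + i_{t-1}) \, 2^{i_{t-1}-1} = \Lambda(2^{i_{t-1}})$, while rewriting the linear term as $a - 2^{i_{t-1}} = \sum_{k=0}^{t-2} 2 \cdot 2^{i_k-1}$ converts each coefficient $2(t-1) - 2k + i_k$ into $2t - 2k + i_k$, reproducing exactly the remaining terms. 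Summing the three pieces then reconstitutes the target formula.

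The step I expect to require the most care is the index bookkeeping: removing the top bit $2^{i_{t-1}}$ drops the parameter $t$ by one in the formula applied to the smaller summand, and I must verify that the extra additive term $(a - 2^{i_{t-1}})$ precisely restores the coefficient $2t$ from $2(t-1)$ across all $t-1$ surviving terms. This reduces to matching the substitution $2^{i_k} = 2 \cdot 2^{i_k - 1}$ against the shift $2(t-1) \mapsto 2t$, so no genuine difficulty arises beyond keeping the summation indices aligned.
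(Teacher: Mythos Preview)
Your proposal is correct and follows essentially the same approach as the paper: induction on $a$, with the power-of-two case handled via the halving recursion and the general case via \cref{2power}, peeling off the top bit $2^{i_{t-1}}$ and checking that the additive term $a-2^{i_{t-1}}=\sum_{k=0}^{t-2}2\cdot 2^{i_k-1}$ shifts each coefficient from $2(t-1)$ to $2t$. The only cosmetic difference is that the paper cites \cref{half} rather than \cref{2power} for the power-of-two step, but the two recursions coincide when $a=2^m$.
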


\begin{proof} 
	This follows through an easy induction on $a$, noting that it holds for $a = 1$. If $a$ is a power of two, that is $t = 1$ and $i_0 > 0$, then by \cref{half} 
	\begin{align*}
		\Lambda(a) & = 2\big( 2^{{i_0}-2} \, (i_0+1) \big) +  2^{i_0-1} = 2^{i_0-1} \, (i_0+2). 
	\end{align*}
    If $a$ is not a power of two, then by \cref{2power}
	\begin{align*}
		\Lambda(a) & =  2^{i_{t-1}-1} (i_{t-1} + 2) + \left( \sum_{k=0}^{t-2} (2(t-1)-2k +i_k) \, 2^{i_k-1} \right) + \sum_{k=0}^{t-2} 2^{i_k} = \sum_{k=0}^{t-1} (2t-2k +i_k) \, 2^{i_k-1}.
	\end{align*}
	establishing the corollary.
\end{proof}

\begin{proposition}\label{one}
	For any $a = 2^k$ where $k \geq 1$ and $a/2 < f < a$, we have
	\begin{equation*}
		\Lambda(f) + \Lambda (a-f)  + a-f < \Lambda(a)
	\end{equation*}
\end{proposition}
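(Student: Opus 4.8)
The plan is to reduce the claim to a strict superadditivity statement for $\Lambda$ at the relevant power of two, and then to deduce that statement in a single line from \cref{prop:lambda}. Note first that the case $k=1$ is vacuous: there $a/2 < f < a$ forces $1 < f < 2$, which admits no integer $f$. So I may assume $k \ge 2$ and set $n := 2^{k-1} \ge 2$.

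First I would expand $\Lambda(f)$. Since $a/2 < f < a$ means $2^{k-1} < f < 2^k$, we have $\ceil{\log_2 f} = k$, so \cref{2power} applies and gives $\Lambda(f) = \Lambda(2^{k-1}) + \Lambda(f - 2^{k-1}) + (f - 2^{k-1})$. Writing $m := a - f$, so that $0 < m < n$ and $f - 2^{k-1} = 2^{k-1} - m = n - m$, a short rearrangement (using $(n-m)+m = n$) yields
\begin{equation*}
	\Lambda(f) + \Lambda(a - f) + (a - f) = \Lambda(n) + \Lambda(m) + \Lambda(n - m) + n.
\end{equation*}
On the other hand, \cref{half} gives $\Lambda(a) = \Lambda(2^k) = 2\Lambda(n) + n$. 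Subtracting, the proposition becomes equivalent to the superadditivity bound
\begin{equation*}
	\Lambda(m) + \Lambda(n - m) < \Lambda(n) \qquad \text{for all } 0 < m < n .
\end{equation*}

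To finish, I would prove this bound directly from \cref{prop:lambda}, exploiting that the term $+(n-f)$ appearing in \cref{eq:lambda_max} is strictly positive. Given $0 < m < n$, set $f_0 := \max(m,\,n-m)$, so that $n/2 \le f_0 < n$ and $\Lambda(m) + \Lambda(n-m) = \Lambda(f_0) + \Lambda(n - f_0)$. Since $f_0$ is then an admissible index in the maximum defining $\Lambda(n)$, \cref{prop:lambda} gives
\begin{equation*}
	\Lambda(n) \ge \Lambda(f_0) + \Lambda(n - f_0) + (n - f_0) > \Lambda(f_0) + \Lambda(n - f_0) = \Lambda(m) + \Lambda(n - m),
\end{equation*}
where the strict inequality uses $n - f_0 > 0$ (which holds because both $m$ and $n-m$ are positive, so $f_0 < n$). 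This establishes the required bound and hence the proposition.

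I expect the only real subtlety to lie in the bookkeeping of the first reduction: correctly applying \cref{2power} to $\Lambda(f)$ (which hinges on $\ceil{\log_2 f} = k$ for $2^{k-1} < f < 2^k$) and checking the algebraic cancellation against $\Lambda(2^k) = 2\Lambda(n) + n$. Once the statement is reduced to the superadditivity inequality, no induction is needed, since the strict gap is supplied for free by the positive summand $n - f$ in \cref{prop:lambda} — this is the conceptual point that makes the argument short.
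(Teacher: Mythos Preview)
Your argument is correct, and it takes a genuinely different route from the paper's proof. The paper proceeds by induction on $k$: it first reduces to the case where both $f$ and $a-f$ are even by invoking the parity computation from Case~2 of \cref{half}, then applies \cref{half} to each of $\Lambda(f)$ and $\Lambda(a-f)$ to factor out a $2$, and finally appeals to the inductive hypothesis at $a/2 = 2^{k-1}$. Your approach instead expands $\Lambda(f)$ once via \cref{2power} (using that $2^{k-1} < f < 2^k$ pins down $\ceil{\log_2 f}$) and reduces the claim to the strict superadditivity $\Lambda(m) + \Lambda(n-m) < \Lambda(n)$ for $n = 2^{k-1}$ and $0 < m < n$; you then read this off directly from \cref{prop:lambda}, since the extra summand $n - f_0 > 0$ supplies the strict gap. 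This avoids both the induction and the parity case split, and in fact your superadditivity step works for arbitrary $n \ge 2$, not just powers of two. The paper's version is a little more self-contained in that it only leans on \cref{half}, whereas yours also uses \cref{2power}; but your argument is shorter and makes the source of the strict inequality more transparent.
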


\begin{proof}
	We will prove the result by induction, noting that it vacantly holds for $k=1$. Following the same argument as in Case~2 in the proof of \cref{half}, we can assume that $f$ and $a-f$ are both even. We therefore have that $\Lambda(f) = 2 \, \Lambda(f/2) + f/2$ and $\Lambda(a-f) = 2 \, \Lambda((a-f)/2) + (a-f)/2$, so that
	\begin{align*}
		\Lambda(f) + \Lambda (a-f)  + a-f  & = 2 \, \Lambda(f/2) + f/2 +2 \, \Lambda((a-f)/2) + (a-f)/2 + a- f \\
		& = 2 \big( \Lambda(f/2) + \Lambda((a-f)/2) + (a-f)/2 \big) + a/2 \\
		& < 2 \Lambda(a/2) + a/2 = \Lambda(a).
	\end{align*}
	In the last step we have used the inductive assumption since $a/2 = 2^{k-1}$.
\end{proof}

Let us now turn our attention to the function $X(a)$.

\begin{lemma} \label{eq:X_max}
	For $a \geq 1$ we have
	\begin{equation*} 
		X(a) = \max \{ \Lambda(a,h) + \Lambda(a,a-1-h) + 1 : 0 \leq h \leq (a-1)/2 \}
	\end{equation*}
\end{lemma}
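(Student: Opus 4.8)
The plan is to prove the two inequalities separately, reducing everything to the structure of \emph{connected} acyclic $N$-free posets. The backbone is \cref{lemma:central_element}: a connected acyclic $N$-free poset $P$ has a central element $p'$ comparable to all others, so writing $U = \{p : p > p'\}$ and $D = \{p : p < p'\}$ we obtain $P = U \sqcup \{p'\} \sqcup D$ with every element of $U$ lying above every element of $D$ (since $u > p' > d$). I would first record two facts about this decomposition. First, $D$ is $V$-free and $U$ is the inverse of a $V$-free poset: the cover graph of a connected acyclic poset is a tree, so for comparable $x<y$ the unique path joining them is a monotone chain; a hypothetical $V$-shape $q_1 < q_2,q_3$ inside $D$ (with $q_2 \nsim q_3$) would force both $q_2$ and $q_3$ onto the unique path from $q_1$ to $p'$, making them comparable, a contradiction. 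Second, because $U$ sits entirely above $D$, any independent family of chains meeting both $U$ and $D$ collapses to a single chain, which gives the exact formula $\ao(P) = \max\big(\ao(U),\, \ao(D),\, h(U)+h(D)+1\big)$.

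For the \emph{lower bound} I would, for each $0 \le h \le (a-1)/2$, glue an extremal $V$-free poset $D$ with $\ao(D)=a$ and $h(D)\le a-1-h$ (of size $\Lambda(a,a-1-h)$) below a central element $p'$, and the inverse of an extremal $V$-free poset $U$ with $\ao(U)=a$ and $h(U)\le h$ (of size $\Lambda(a,h)$) above $p'$. The cover graph is again a tree, so the poset is acyclic, and since incomparable pairs live entirely inside $U$ or inside $D$, any $N$-shape would have to sit inside one of them; as a $V$-free poset is $N$-free and $N$-freeness is self-dual, no $N$-shape can occur. The $\ao$-formula then gives $\ao(P)=a$ (its first two terms equal $a$ and the crossing term is at most $a$), so $X(a)\ge \Lambda(a,h)+\Lambda(a,a-1-h)+1$, and maximising over $h$ yields the claimed lower bound.

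For the \emph{upper bound} I would first treat the connected case and then reduce the general case to it. Given connected $P$ with $\ao(P)=a$, the formula gives $\ao(U),\ao(D)\le a$ and $h(U)+h(D)+1\le a$; combined with $|U|=|U^{\text{inv}}|\le \Lambda(\ao(U),h(U))\le \Lambda(a,h(U))$ (monotonicity of $\Lambda$ in its first argument, via adjoining isolated points) and the analogous bound for $D$, this gives $|P|\le \Lambda(a,h(U))+\Lambda(a,h(D))+1$ with $h(U)+h(D)\le a-1$; symmetrising in the two heights puts it into the stated form. For a disconnected $P$ with components $P_1,\dots,P_m$ I would merge them without changing the size or leaving the class: reuse the central element $c_1$ of $P_1$ as a global centre, place the disjoint union of all the $U_i$ above it and of all the $D_i$ below it, and reinsert each remaining centre $c_j$ as a new maximum of its block $D_j$ inside the lower part. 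Since $h(D_j)+1\le \ao(P_j)$, this neither creates a $V$-shape in the (now $V$-free) lower part nor raises $\ao$ above $a$, and the resulting $P^{*}$ is connected, acyclic and $N$-free with $|P^{*}|=|P|$ and $\ao(P^{*})\le a$; the connected bound together with monotonicity of the right-hand side in $a$ then gives $|P|\le$ the claimed maximum.

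The main obstacle is precisely this disconnected-to-connected reduction: one must verify carefully that the rebuilding step preserves acyclicity and $N$-freeness and does not increase $\ao$, the last point hinging on the crossing-chain inequality $h(U_j)+h(D_j)+1\le \ao(P_j)$. A secondary technical point is the bookkeeping around $\Lambda(a,\cdot)$ — its monotonicity in both arguments (isolated points and truncation) and the boundary reading $\Lambda(a,0)=0$, corresponding to an empty half, which is what makes the endpoint $h=0$ of the maximum meaningful.
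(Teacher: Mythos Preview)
Your argument is correct and follows the same skeleton as the paper: the central-element decomposition of \cref{lemma:central_element}, the identification of the two halves as $V$-free (up to inversion), and the glueing construction for the lower bound are exactly what the paper does. Your explicit formula $\ao(P)=\max\bigl(\ao(U),\ao(D),h(U)+h(D)+1\bigr)$ makes precise something the paper leaves implicit, and your justification that $D$ is $V$-free via uniqueness of monotone paths in the cover tree is cleaner than the paper's one-line assertion.

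The only substantive difference is in the disconnected case. The paper simply writes ``let $P$ be a \emph{connected}, acyclic and $N$-free poset with $|P|=X(a)$'' and moves on, whereas you build an explicit merging $P^{*}$. Your construction is correct, but it is more work than needed. A shorter route: if a maximiser $P$ were disconnected, adjoin a single new element $t$ above all of $P$. The resulting $P'$ is still acyclic and $N$-free (no $N$-shape can use $t$, since $t$ is comparable to everything), has $|P'|=|P|+1$, and satisfies $\ao(P')=\max\bigl(\ao(P),h(P)+1\bigr)$. If $h(P)<a$ this gives $\ao(P')=a$ and $|P'|>X(a)$, a contradiction; if $h(P)=a$ then one component already has height $a$, hence $\ao$ at least $a$, hence $\ao$ exactly $a$, forcing all other components to be empty. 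Either way $P$ is connected. This is presumably the argument the paper has in mind, and it would let you drop the merging construction entirely.

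One small bookkeeping point you flagged yourself: the convention $\Lambda(a,0)=0$ (empty half) is indeed needed to make the endpoint $h=0$ of the maximum meaningful, and also to cover $a=1$, where the statement reads $X(1)=\Lambda(1,0)+\Lambda(1,0)+1=1$. The paper does not comment on this either.
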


\begin{proof} 
	We prove the statement through a matching upper and lower bound.

	\medskip
	\noindent \textbf{For a lower bound}, fix $0 \leq h \leq (a+1)/2$ maximising \cref{eq:X_max} and let $P_1$ be a $V$-free poset of size $\Lambda(a,h)$ with $\ao(P_1) \leq a$ and $h(P_1) \leq h$ and $P_2$ a $V$-free poset of size $\Lambda(a,a+1-h)$ with $\ao(P_2) \leq a$ and $h(P_2) \leq a-1-h$. Let $P_2'$ denote poset obtained from $P_2$ by inverting all of its relations and let $P$ be the poset obtained by joining $P_1$ and $P_2'$ through a central element that is greater than all elements in $P_1$ and smaller than all elements in $P_2'$. Clearly $h(P) = h(P_1) + h(P_2) - 1 \leq a$ and $\ao(P) \leq a$ but $\abs{P} = \Lambda(a,h) + \Lambda(a,a+1-h) - 1$ as desired.
	
	\medskip
	\noindent \textbf{For an upper bound}, let $P$ be a connected, acyclic and $N$-free poset with $\abs{P} = X\big(a\big)$ and $\ao(P) = a$. Let $p' \in P$ be an element comparable to every  element of the poset $P$. The existence of such an element is guaranteed by \cref{lemma:central_element}. We have that $h(P) = a$ as otherwise we can replace $p'$ with an edge, without increasing the parameter $\ao(P)$, a contradiction. If we remove $p'$ and its connections from the Hasse diagram of $P$, we obtain a $V$-free poset $P_1$ and as well as a poset $P_2$ whose inverse is also $V$-free. Clearly $h(P_1) + h(P_2) + 1 \leq h(P) = a$ and $\ao(P_1), \ao(P_2) \leq a$. Writing $h = \min \big( h(P_1), h(P_2) \big) \leq (h-1)/2 = (a-1)/2$, we clearly have $X(a) \leq \Lambda(a,h) + \Lambda(a,a-1-h) + 1$.
\end{proof}

\begin{proposition}\label{hlower}
	For $a \geq 2$ and $0 \leq h \leq a$, we have that
	\begin{equation}
		\Lambda(a,h) \leq \Lambda(a) - (a-h)
	\end{equation}
	If $a/2 \leq h \leq a$, then we in fact have equality.
\end{proposition}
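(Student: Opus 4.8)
The plan is to prove the two directions separately, relying throughout on the observation that a $V$-free poset is precisely one in which every element is covered by at most one other element, so that its Hasse diagram is a forest whose trees are rooted at their maximal elements and branch only downwards. The key consequence I will use repeatedly is that attaching a chain \emph{above} every element of a $V$-free poset keeps it $V$-free and, crucially, cannot increase $\ao$: each newly added element sits above everything and is therefore comparable to every other element, so any independent collection of chains that uses an added element must consist of that single chain alone.

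For the upper bound $\Lambda(a,h) \le \Lambda(a) - (a-h)$, I would let $P$ be a $V$-free poset with $\ao(P) = a$ and $h(P) \le h$ of maximum size $\Lambda(a,h)$ (when $h = 0$ no such $P$ exists and the bound is vacuous). Form $P^+$ by adding a chain of $a-h$ new elements on top of all of $P$. By the observation above, $P^+$ is $V$-free with $\ao(P^+) = a$: a collection using an added element is forced to be a single chain of size at most $h(P^+) = h(P)+(a-h) \le a$, while a collection avoiding the new elements lives inside $P$ and so has size at most $\ao(P)=a$. Since also $h(P^+) \le a$, the poset $P^+$ is admissible for $\Lambda(a) = \Lambda(a,a)$, giving $|P| + (a-h) = |P^+| \le \Lambda(a)$, which is the claim.

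For the equality when $a/2 \le h \le a$ (so that $h \ge \ceil{a/2}$), I would exhibit a matching construction. Using \cref{half}, take $V$-free posets $P_1,P_2$ realising $\Lambda(\ceil{a/2})$ and $\Lambda(\floor{a/2})$, which by the argument in the proof of \cref{prop:lambda} may be assumed to have height equal to their $\ao$-value. Form $P$ as the disjoint union $P_1 \sqcup P_2$ with a chain of $k' = h - \ceil{a/2} \ge 0$ elements added on top. Then $P$ is $V$-free with $h(P) = \ceil{a/2} + k' = h$ and, since $\ao$ is additive over the incomparable parts $P_1,P_2$ and the top chain again forces a single chain of size at most $h \le a$ when used, $\ao(P) = \ao(P_1)+\ao(P_2) = a$. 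A short computation using \cref{half} gives $|P| = \Lambda(\ceil{a/2}) + \Lambda(\floor{a/2}) + k' = \Lambda(a) - (a-h)$, so that $\Lambda(a,h) \ge \Lambda(a)-(a-h)$, matching the upper bound.

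The step requiring the most care is the verification that the added top chain never increases $\ao$; everything else is bookkeeping on heights together with the appeal to \cref{half}. The reason equality is restricted to $a/2 \le h \le a$ is exactly that $k' = h - \ceil{a/2}$ must remain nonnegative, i.e.\ the height cannot be pushed below $\ceil{a/2}$, the height of the branching ``core'' $P_1 \sqcup P_2$; for smaller $h$ this construction breaks down and one genuinely loses more than $a-h$ elements.
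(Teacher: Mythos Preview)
Your proof is correct and follows essentially the same approach as the paper. For the upper bound you do exactly what the paper does: attach a chain of $a-h$ new elements above an extremal $P$ and observe that the result is still $V$-free with $\ao = a$. For the lower bound the paper simply points back to the construction in the proof of \cref{prop:lambda}; your version is that same construction with the specific choice $f=\ceil{a/2}$ (justified by \cref{half}) and the top chain shortened to $k'=h-\ceil{a/2}$ so that the height lands at $h$ rather than $a$. Your extra care in verifying that adding the top chain cannot raise $\ao$ is a welcome detail the paper leaves implicit.
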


\begin{proof}
	 A lower bound of this when $a/2 \leq h \leq a$ follows immediately from the construction in the lower bound in the proof of \cref{prop:lambda}. For an upper bound for any $0 \leq h \leq a$, let $P$ be a poset of size $\Lambda(a,h)$, height $h(P) = h$ and satisfying $\ao(P) = a$. Consider the poset $P'$ obtained by adding a chain $x_1 < x_2 < \cdots < x_{a-h}$ such that $x_1 > x$ for any $x \in P$. It is clear that $\ao(P') = a$ and therefore $\Lambda(a) \geq \abs{P'} = \abs{P} + (a-h) = \Lambda(a,h) + (a-h)$.
\end{proof}

\begin{proposition} \label{proposition_minus_one}
	Let $a \geq 2$. If $a$ is even but not a power of two, then $\Lambda(a,a/2-1)=\Lambda(a) - a/2-1$. If $a$ is a power of two, then $\Lambda(a,a/2-1)=\Lambda(a) - a/2-2$.
\end{proposition}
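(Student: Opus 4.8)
The plan is to prove matching lower and upper bounds, writing $h = a/2-1$ throughout. Both lower bounds come from a disjoint union $P = B_1 \sqcup B_2$ of two $V$-free blocks, for which $\ao(P) = \ao(B_1) + \ao(B_2)$ and $h(P) = \max(h(B_1),h(B_2))$; only the split of $a$ differs between the two cases. When $a = 2^k$ (so $k\geq 2$, as $h\geq 1$ forces $a\geq 4$) I split evenly and let $B_1,B_2$ each realise $\Lambda(a/2,h)$; since $h \geq (a/2)/2$ this is the equality case of \cref{hlower}, so $\abs{B_i} = \Lambda(a/2)-1$ and hence $\abs{P} = 2\Lambda(a/2)-2 = \Lambda(a) - a/2 - 2$ by \cref{half}. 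When $a$ is even but not a power of two I instead peel off the largest power of two below $a$: setting $2^\alpha < a$ maximal, one checks $a/2 < 2^\alpha \le a-2$, so that \cref{hlower} gives $\abs{B_1} = \Lambda(2^\alpha,h) = \Lambda(2^\alpha) - (2^\alpha - h)$ while $B_2$ may realise the unconstrained value $\Lambda(a-2^\alpha,h) = \Lambda(a-2^\alpha)$ because $a - 2^\alpha \leq h$. Summing and substituting the decomposition $\Lambda(a) = \Lambda(2^\alpha) + \Lambda(a-2^\alpha) + (a-2^\alpha)$ from \cref{2power} then yields $\abs{P} = \Lambda(a) - a/2 - 1$ exactly.

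For the upper bound in the non-power case there is nothing more to do: \cref{hlower} immediately gives $\Lambda(a,h) \le \Lambda(a) - (a-h) = \Lambda(a) - a/2 - 1$, matching the construction. All the difficulty is concentrated in the power-of-two upper bound, where one must improve on \cref{hlower} by exactly one. Here I would take an optimal $P$ with $h_0 = h(P) \le h$ and stack a chain of length $a - h_0$ above all of $P$, producing a poset $P'$ with $\ao(P')=a$ and $h(P') = a$, so that $\abs{P'} \le \Lambda(a)$ and $\abs{P} = \abs{P'} - (a-h_0)$. Since $a - h_0 \ge a/2+1$, it suffices to show that $P'$ is never extremal, i.e.\ $\abs{P'} \le \Lambda(a) - 1$, as this gives $\abs{P} \le \Lambda(a) - 1 - (a/2+1) = \Lambda(a) - a/2 - 2$.

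The hard part is precisely showing that $P'$ cannot be extremal, and this is where \cref{one} enters. The idea is to exploit the rigidity of the extremal posets when $a$ is a power of two: rerunning the analysis of \cref{prop:lambda} on an extremal poset of height $a$, the first branch point from the top occurs at a depth $a-f$ where $f$ maximises $\Lambda(f)+\Lambda(a-f)+(a-f)$, and \cref{one} forces $f = a/2$, so the top stem of any extremal poset ought to have length exactly $a/2$. By construction $P'$ carries an unbranched stem of length $s \ge a - h_0 \ge a/2 + 1 > a/2$ (the appended chain, continuing into $P$ when $P$ has a unique maximal element and branching only at the foot of the appended chain otherwise), contradicting this. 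The obstacle I anticipate is upgrading ``\emph{some} extremal poset branches at depth $a/2$'' to ``\emph{no} extremal poset branches below depth $a/2$'': the clean route is to prove the stronger identity $\Lambda(2^k,h) = 2\,\Lambda(2^{k-1},h)$ for all $0 \le h \le 2^{k-1}-1$ by induction on $k$, where \cref{one} is exactly what makes the balanced top-level split strictly optimal and converts the otherwise non-strict estimate $\abs{P'} \le s + \Lambda(a_1,a-s) + \Lambda(a_2,a-s)$—obtained by deleting the length-$s$ top stem of $P'$ and splitting what remains into the tallest subtree, of order $a_1 \ge a-s$, and the rest, of order $a_2$—into a strict one once $s > a/2$.
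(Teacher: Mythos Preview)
Your proposal is correct and follows essentially the same approach as the paper: identical lower-bound constructions (the $2^\alpha + (a-2^\alpha)$ split in the non-power case, the even split in the power case), the non-power upper bound directly from \cref{hlower}, and for the power-of-two upper bound the paper likewise stacks a chain of length $a/2+1$ on a hypothetical counterexample to obtain $P'$ with $\abs{P'}=\Lambda(a)$, removes the top stem of length $y\geq a/2+1$, and bounds $\abs{P'}\leq y+\Lambda(x,a-y)+\Lambda(a-x,a-y)\leq x+\Lambda(x)+\Lambda(a-x)\leq\Lambda(a)$, observing that equality throughout would require both $x\leq a-y\leq a/2-1$ (for the \cref{hlower} step) and $x=a/2$ (by \cref{one}), which is impossible. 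This is precisely your stem-analysis route made explicit; the detour through the inductive identity $\Lambda(2^k,h)=2\,\Lambda(2^{k-1},h)$ is unnecessary.
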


\begin{proof}
	Let us write $\alpha = \ceil{\log_2(a)} - 1$, that is $2^\alpha < a \leq 2^{\alpha+1}$. By \cref{hlower} we know that $\Lambda(a,a/2-1) \leq \Lambda(a)-a/2-1$.

	\medskip
	\noindent {\bf Case 1.}	If $a$ is even but not a power of two, the by \cref{2power} we have that
	\begin{equation} \label{eq:La}
		\Lambda(a) = \Lambda(2^\alpha)+\Lambda(a-2^\alpha)+(a-2^\alpha).
	\end{equation}
	Since $a/2 > 2^{\alpha-1}$, \cref{hlower} states that
	\begin{equation} \label{eq:a/2m1-1}
		\Lambda(2^\alpha,a/2-1)=\Lambda(2^\alpha)-2^{\alpha}+a/2-1.
	\end{equation}
	Since $a$ is not a power of two, we have $a/2 - 1 \geq a-2^\alpha$ and therefore
	\begin{equation} \label{eq:a/2m1-2}
		\Lambda(a-2^\alpha,a/2-1)=\Lambda(a-2^\alpha).
	\end{equation}
	Using \cref{eq:a/2m1-1} and \cref{eq:a/2m1-2}, we can choose $V$-free posets $P_1$, $P_2$  such that $\ao(P_1) = 2^\alpha$, $h(P_1) \leq a/2-1$ and $\abs{P_1} = \Lambda(2^\alpha,a/2-1)$ as well as $\ao(P_2) = a-2^\alpha$ and $\abs{P_2} = \Lambda(a-2^\alpha)$. The disjoint union of $P_1$ and $P_2$ is a $V$-free poset satisfying $\ao(P) = a$, $h(P) = a/2-1$ and has size $\Lambda(2^\alpha)-2^{\alpha}+a/2-1 + \Lambda(a-2^\alpha) = \Lambda(a) - a/2-1$ by \cref{eq:La}, establishing the result for this case.

	\medskip
	\noindent {\bf Case 2.} If $a = 2^{\alpha+1}$, then
	\begin{align*}
		\Lambda(2^{\alpha +1},a/2-1) \geq 2 \, \Lambda(2^{\alpha},2^{\alpha}-1) = 2 \, \Lambda(2^{\alpha}) - 2 = \Lambda(2^{\alpha +1}) - 2^{\alpha}-2,
	\end{align*}
	where we have used both \cref{hlower} and \cref{2power}. Assume now that there exists some $V$-free poset $P$ satisfying $\ao(P) = 2^{\alpha+1}$, $h(P) = 2^{\alpha-1}-1$ and $\abs{P} = \Lambda(2^{\alpha +1}) - 2^{\alpha}-1$. Let $P'$ be the poset obtained from $P$ by adding a chain of $2^{\alpha}+1$ elements $p_1,p_2,\dots,p_{2^{\alpha}+1}$ such that $p_{2^{\alpha}+1} > \cdots > p_2 > p_1$ and $p_1 > p$ for any $p\in P$. Clearly $P'$ is $V$-free and satisfies $|P'| = \Lambda(2^{\alpha}+1)$, $h(P') = 2^{\alpha} + 2^{\alpha-1} < 2^{\alpha+1}$ as well as $\ao (P') = a$.
	
	Let $y$ now denote the minimal integer such that if we iteratively delete the first $y$ maximal elements of $P'$, the resulting poset becomes disconnected. By construction it is clear that $y \geq 2^{\alpha}+1$. Let $P_1$ and $P_2$ be two disjoint (but not necessarily connected) and non-empty posets making up $P'$ after deleting the first $y$ maximal elements. Let us now without loss of generality assume that $\ao(P_1)\leq \ao(P_2)$ and write $x = \ao(P_1)$.     Since $\ao(P) = 2^{\alpha+1}$ and $y \geq 2^{\alpha}+1$, it is clear that $x \leq 2^\alpha - 1 \leq y-2$. By \cref{hlower} and \cref{prop:lambda}, it follows that
	\begin{align*}
         \Lambda(a) = \abs{P'} & \leq y+\Lambda(x,2^{\alpha +1}-y)+\Lambda(2^{\alpha +1}-x,2^{\alpha +1}-y)\\ & \leq y+\Lambda(x)+\Lambda(2^{\alpha +1}-x)-(y-x)\leq \Lambda(2^{\alpha +1}).
     \end{align*}
     We note that by \cref{hlower} equality in the second inequality only holds if $x \leq 2^{\alpha+1}-y$ and by \cref{one} equality in the second inequality only holds if $x = 2^{\alpha}$. Since $y \geq 2^{\alpha}+1$ these two cases cannot occur simultaneously, giving us the contradiction $\Lambda(a) < \Lambda(2^{\alpha +1})$.
   \end{proof}

We can now determine when the expression in \cref{eq:X_max} is maximised.

\begin{proposition} \label{prop:X_maximiser}
	For any $a \geq 2$, we have
	\begin{equation*} 
		X(a) = \Lambda(a,\floor{(a-1)/2}) + \Lambda(a,\ceil{(a-1)/2}) + 1.
	\end{equation*}
\end{proposition}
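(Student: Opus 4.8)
The plan is to read $X(a)$ off from \cref{eq:X_max}, so that the task becomes showing that $g(h) := \Lambda(a,h) + \Lambda(a,a-1-h)$ is maximised over $0 \le h \le (a-1)/2$ at the largest admissible value $h = \floor{(a-1)/2}$. The organising device is the \emph{deficiency} $d(h) := \Lambda(a) - (a-h) - \Lambda(a,h)$, which is nonnegative by \cref{hlower} and vanishes once $h \ge \ceil{a/2}$. Whenever $h \le \floor{a/2}-1$ the companion satisfies $a-1-h \ge \ceil{a/2}$, so \cref{hlower} evaluates $\Lambda(a,a-1-h) = \Lambda(a)-(1+h)$ exactly, and hence $g(h) = 2\Lambda(a)-a-1-d(h)$. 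On this range $g$ is therefore largest exactly where the deficiency is smallest, and the whole proof reduces to understanding $d(h)$ near $h = a/2$.

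First I would settle the case where $a$ is even but not a power of two. Then the balance point $h = a/2-1$ lies in the range above, \cref{proposition_minus_one} gives $d(a/2-1)=0$, and so $g(a/2-1) = 2\Lambda(a)-a-1 \ge g(h)$ for every admissible $h$; this yields $X(a) = 2\Lambda(a)-a$.

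For odd $a$ the balance point $h=(a-1)/2 = \ceil{a/2}-1$ sits just outside that range, so I must compute $\Lambda(a,(a-1)/2)$ by hand. I would produce the lower bound by taking the disjoint union of a $V$-free poset $P_1$ with $\ao(P_1)=\ceil{a/2}$ and height at most $\ceil{a/2}-1$, whose size is $\Lambda(\ceil{a/2})-1$ by the equality case of \cref{hlower}, together with a $V$-free poset $P_2$ realising $\ao(P_2)=\floor{a/2}$ of size $\Lambda(\floor{a/2})$. By \cref{half} this union has size $\Lambda(a)-(a+1)/2$, height $(a-1)/2$ and $\ao=a$, so with the upper bound of \cref{hlower} it pins down $\Lambda(a,(a-1)/2)=\Lambda(a)-(a+1)/2$, whence $g((a-1)/2)=2\Lambda(a)-a-1$ again dominates the range.

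The hard part will be the power-of-two case $a=2^k$. Here \cref{proposition_minus_one} gives $d(a/2-1)=1$, so the balance value drops to $g(a/2-1)=2\Lambda(a)-a-2$, one below the generic ceiling; to finish I must upgrade the estimate to $d(h)\ge 1$ for all $h\le a/2-2$. I would argue by contradiction: if $d(h)=0$ for such an $h$, then stacking a chain of length $a-h$ on top of the extremal height-$h$ base produces a connected $V$-free poset $P'$ of size exactly $\Lambda(a)$ carrying a top chain of length $a-h\ge a/2+2$. Decomposing $P'$ below its first branching vertex as in the proof of \cref{prop:lambda} splits it into a lower piece containing the remaining chain and a heavier piece, both of height at most $f:=a-x\le a/2-2$ because they hang beneath a chain of length $x=a-f$. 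Crucially, feeding the height bound on the heavier piece through \cref{hlower} (rather than merely its $\ao$-value) collapses the estimate to $\abs{P'}\le \Lambda(f)+\Lambda(a-f)+f$, which is the quantity of \cref{prop:lambda} evaluated at $a-f>a/2$; \cref{one} then makes it strictly less than $\Lambda(a)$, the desired contradiction. The step I expect to demand the most care is precisely this exploitation of the height constraint on the heavy lower part, since the naive bound using only its $\ao$-value does not beat $\Lambda(a)$ and so fails to trigger the strict inequality of \cref{one}.
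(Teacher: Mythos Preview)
Your route is substantially different from (and much longer than) the paper's. The paper's proof is a two-line monotonicity argument: since $\Lambda(a,h)-\Lambda(a,h-1)\ge 1$ for every $h$, while $\Lambda(a,a-h)-\Lambda(a,a-1-h)=1$ whenever $a-1-h\ge\ceil{a/2}$ by the equality case of \cref{hlower}, the function $g(h)=\Lambda(a,h)+\Lambda(a,a-1-h)$ is non-decreasing on the whole range $0\le h\le\floor{(a-1)/2}$, so its maximum sits at the top. In your own language this is exactly the observation that your deficiency $d(h)=\Lambda(a)-(a-h)-\Lambda(a,h)$ is \emph{non-increasing} in $h$; you set up $d$ but never exploit this, which is why the power-of-two case balloons.

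That case is also where your sketch has a genuine gap. Your claimed bound $\abs{P'}\le \Lambda(f)+\Lambda(a-f)+f$ after decomposing below the first branching vertex tacitly uses $\ao(P_1)=f$, but in the proof of \cref{prop:lambda} that equality is obtained only after minimising $x$ over all extremal posets---a freedom you do not have, since your $P'$ is the specific poset produced by stacking. Without that, you only know $\ao(P_1)\ge f$, and the estimate does not collapse as written. The argument can be repaired along the lines of Case~2 of \cref{proposition_minus_one} (bounding both pieces via \cref{hlower} in terms of their actual $\ao$-values and then invoking \cref{one}), but this is entirely unnecessary: from $\Lambda(a,h)-\Lambda(a,h-1)\ge 1$ you get $d(h)\ge d(a/2-1)=1$ for every $h\le a/2-1$ immediately, which is exactly what you need. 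Your odd-$a$ construction showing $\Lambda(a,(a-1)/2)=\Lambda(a)-(a+1)/2$ is, on the other hand, a genuine addition---the paper's \cref{hlower} as stated only guarantees equality for $h\ge\ceil{a/2}$, so this fills a small gap in the paper's own argument for odd $a$.
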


\begin{proof}
	By \cref{hlower} we know that $\Lambda(a,a-h) - \Lambda(a,a-h-1) = 1$ for any $1 \leq h \leq (a-1)/2$ since $a-h \geq (a+1)/2 \geq \ceil{a/2}$. Since $\Lambda(a,h) - \Lambda(a,h-1) \geq 1$, it follows that the expression in \cref{eq:X_max} is maximised when $h = \floor{(a-1)/2}$.
\end{proof}

\begin{corollary} \label{cor:X}
	Let $a \geq 2$. If $a = 2^{i_0} + \cdots + 2^{i_{t-1}}$ for some $i_0 < \cdots < i_{t-1}$, then
	\begin{equation}
		X(a) = \sum_{k=0}^{t-1} \big( 2(t-k) +i_k-1 \big) \, 2^{i_k} - 1 + \ceil{\log_2 a} - \floor{\log_2 a}.
	\end{equation}
	In particular, if $a = 2^k$ for some $k \geq 1$, then $X(a) = (k+1) \, 2^k - 1$.
	%
	%
\end{corollary}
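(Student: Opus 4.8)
The plan is to start from the characterization of the maximizer in \cref{prop:X_maximiser}, which reduces the computation of $X(a)$ to evaluating the two height-restricted quantities $\Lambda(a,\floor{(a-1)/2})$ and $\Lambda(a,\ceil{(a-1)/2})$ and adding $1$. First I would record the clean algebraic target: combining \cref{cor:L} with $a=\sum_k 2^{i_k}$ gives
\begin{equation*}
	2\,\Lambda(a) - a = \sum_{k=0}^{t-1} \big( 2(t-k) + i_k - 1 \big)\, 2^{i_k},
\end{equation*}
so it suffices to show that $X(a)$ equals $2\Lambda(a)-a$ when $a$ is not a power of two and $2\Lambda(a)-a-1$ when it is. The term $\ceil{\log_2 a} - \floor{\log_2 a}$ in the statement is precisely the indicator of $a$ not being a power of two, and the special case $a=2^k$ then drops out of $\Lambda(2^k)=2^{k-1}(k+2)$.

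Next I would split by the parity of $a$. When $a$ is even, both restrictions are explicit: $\floor{(a-1)/2}=a/2-1$ and $\ceil{(a-1)/2}=a/2$. By \cref{hlower} we have $\Lambda(a,a/2)=\Lambda(a)-a/2$, while \cref{proposition_minus_one} supplies $\Lambda(a,a/2-1)=\Lambda(a)-a/2-1$ if $a$ is not a power of two and $\Lambda(a,a/2-1)=\Lambda(a)-a/2-2$ if it is. Summing and adding $1$ gives exactly $2\Lambda(a)-a$ in the first case and $2\Lambda(a)-a-1$ in the second, as required.

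The main obstacle is the odd case, since there $\floor{(a-1)/2}=\ceil{(a-1)/2}=(a-1)/2$ lies just below the threshold $h\ge a/2$ at which \cref{hlower} guarantees equality, and \cref{proposition_minus_one} is stated only for even $a$. Here I would argue $\Lambda(a,(a-1)/2)=\Lambda(a)-(a+1)/2$ by matching bounds. The upper bound is immediate from \cref{hlower}. For the lower bound I would exhibit a $V$-free poset of the right size as a disjoint union: take the split $f=\ceil{a/2}=(a+1)/2$ and $a-f=\floor{a/2}=(a-1)/2$, and let $P$ be the disjoint union of an optimal $V$-free poset witnessing $\Lambda((a+1)/2,(a-1)/2)$ and one witnessing $\Lambda((a-1)/2,(a-1)/2)=\Lambda((a-1)/2)$. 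Since disjoint unions add $\ao$ and take the maximum height, $P$ is $V$-free with $\ao(P)=a$ and $h(P)\le(a-1)/2$. For $a\ge 3$ one checks $(a-1)/2\ge\tfrac12\ceil{a/2}$, so \cref{hlower} applies to give $\Lambda((a+1)/2,(a-1)/2)=\Lambda((a+1)/2)-1$; then \cref{half} yields $\Lambda((a+1)/2)+\Lambda((a-1)/2)=\Lambda(a)-(a-1)/2$, whence $\abs{P}=\Lambda(a)-(a+1)/2$. Thus $\Lambda(a,(a-1)/2)=\Lambda(a)-(a+1)/2$ and $X(a)=2\Lambda(a)-a$, which is correct since an odd $a\ge 3$ is never a power of two.

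Finally I would collect the three cases and substitute the identity for $2\Lambda(a)-a$ displayed above, confirming the binary formula together with the $\ceil{\log_2 a}-\floor{\log_2 a}$ indicator, and read off $X(2^k)=(k+1)2^k-1$ from $\Lambda(2^k)=2^{k-1}(k+2)$. I expect the only genuinely new input beyond bookkeeping to be the disjoint-union construction in the odd case.
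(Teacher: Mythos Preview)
Your proof is correct and follows the same route as the paper: establish $X(a)=2\Lambda(a)-a-1+\lceil\log_2 a\rceil-\lfloor\log_2 a\rfloor$ from \cref{prop:X_maximiser}, \cref{hlower} and \cref{proposition_minus_one}, then plug in \cref{cor:L}. Your disjoint-union construction for odd $a$, showing $\Lambda(a,(a-1)/2)=\Lambda(a)-(a+1)/2$, is in fact more explicit than the paper, which cites only \cref{proposition_minus_one} (stated for even $a$) and leaves the odd case implicit.
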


\begin{proof} 
	By \cref{prop:X_maximiser} and \cref{proposition_minus_one}, we have that
	\begin{equation*}
		X(a) = 2\Lambda(a) - a - 1 + \ceil{\log_2 a} - \floor{\log_2 a}
	\end{equation*}
	so that the result follows using \cref{cor:L}.
\end{proof}

\begin{proof}[Proof of \cref{thm:acyclic}]
	Given $n$, let $k \in \mathbb{N}$ be such that $k \, 2^{k-1} - 1 < n \leq (k+1) \, 2^k - 1$. By the previous corollary it follows that $2^{k-1} < \ao(\mT_n) \leq 2^{k} $,	from which the desired asymptotic behaviour follows.
\end{proof}

\section{Remarks and Open Questions} \label{sec:remarks}

It would be of interest to further explore how the parameter $\ao(P)$ relates to natural restrictions of the poset. A commonly studied type of posets are those whose cover graph (or sometimes the Hasse diagram) are planar. A first question could be if one can improve the general lower bound of $\ceil{\sqrt{n}}$ in that case. The best current constructions of planar posets contain families of independent chains of size $\sqrt{2n}$, contrasting with the lower bound of $\sqrt{n}$. The question therefore becomes about determining the correct leading coefficient.

Another important restriction one can impose on a poset $P$ is to bound its \emph{dimension}, that is the least number of linear orderings needed to describe $P$ as their intersection. Since the 70s, several key results have been established which relate the dimension of a poset to the planarity of its Hasse diagram, cover graph or comparability graph. Most notably, posets with planar Hasse diagram have small dimension if they have a maximum or minimum element~\cite{BakerFishburnRoberts-1971, TrotterMoore-1977}, but otherwise can have large dimension~\cite{Kelly-1981}.

More recently, strong connections have been made between the dimension of posets and certain graph parameters. A Theorem of Dilworth bounds the dimension from above by the width of the poset. Remarkably, in~\cite{StreibTrotter-2014} it was shown that the dimension of a poset with a planar cover graph is also bounded by the height, proving a conjecture of Felsner, Li and Trotter~\cite{FelsnerLiTrotter-2010}. Very recently polynomial bounds were obtained for that relationship in~\cite{KozikMicekTrotter-2019} and in~\cite{JoretMicekWiechert-2017} linear bounds were obtained for the more restrictive case of posets with a planar Hasse diagram.

Given the fact that planar posets of large dimension can be neither short nor narrow and the relation between $\ao(P)$ and the width as well as the height of a poset, it would be of interest to explore if a similar relation between the  $\ao(P)$ and the dimension of planar posets exists. Here it is important that, unlike the width or height, the parameter will depend on the cardinality of the poset. Likewise, the dimension would have to grow sufficiently fast with the size of the poset. In particular, one might ask if $\min \log \ao(P) / \log n = 1/2 + o_n(1)$ always holds when minimising over all planar poset $P$ of size $n$ and dimension $d = d(n)$, or if the exponent on the right-hand side increases when $d$ grows fast enough.

Returning to the more general setting of graph classes, it is very natural to consider the case of planar graphs.  Let $\mathcal{G}_n$ be the family of all $n$ vertex planar graphs.  Since the independence number of an $n$-vertex planar graph is at least $n/4$ we have that $\ao(\mathcal{G}_n) \ge n/4$.  On the other hand, when 7 divides $n$, consider the graph $G$ obtained by taking vertex disjoint copies of the join of a $5$-cycle and an independent set of size 2.  In each connected component the value of the parameter is $3$, so this provides a construction proving that $\ao(\mathcal{G}_n) \le 3n/7$.  It would be interesting to determine more precise estimates on $\ao(\mathcal{G}_n)$.

\bigskip
\noindent \textbf{Acknowledgements.} We would like to thank Maria Axenovich, who first brought this problem to the attention of the third author for the case of $V$-free acyclic posets. The second author would also like to thank Ervin Gy\H{o}ri and the Combinatorics and Discrete Mathematics group at the Alfr\'ed R\'enyi Institute for their hospitality.

The research of the first author was partially supported by the National Research, Development and Innovation Office -- NKFIH, grant K 116769 and SNN 117879 and by the Shota Rustaveli National Science Foundation of Georgia SRNSFG, grant number DI-18-118.   
The research of the second author was supported by the Spanish Ministerio de Econom\'ia y Competitividad FPI grant under the project MTM2014-54745-P and the Mar\'ia de Maetzu research grant MDM-2014-0445.
The research of the third author is supported by IBS-R029-C1.         
         

\end{document}